 \newtheorem{thm}{Theorem}[section]
 \newtheorem{lem}[thm]{Lemma}
 \newtheorem{prop}[thm]{Proposition}
 \theoremstyle{definition}
 \theoremstyle{remark}
 \newtheorem{rem}[thm]{Remark}
 \theoremstyle{definition}
 \newcommand{\X}{\mathcal{X}}
 \newcommand{\XX}{\mathbb{X}}
 \newcommand{\PP}{\mathbb{P}}
\def\move-in{\parshape=1.75true in 5true in}
\def\X{\mathbb X} 
 \def\P{\mathbb P} 
 \def\ds{\displaystyle}
\begin{document}

\title{{The Secant Line Variety to the Varieties of Reducible Plane Curves}}

\author[M.V.Catalisano]{Maria Virginia Catalisano}
\address[M.V.Catalisano]{Dipartimento di Ingegneria Meccanica, Energetica, Gestionale e dei Trasporti, Universit\`{a} di Genova, Genoa, Italy.}
\email{catalisano@dime.unige.it}

\author[A.V. Geramita]{Anthony V. Geramita}
\address[A.V. Geramita]{Department of Mathematics and Statistics, Queen's University, King\-ston, Ontario, Canada and Dipartimento di Matematica, Universit\`{a} di Genova, Genoa, Italy}
\email{Anthony.Geramita@gmail.com \\ geramita@dima.unige.it  }

\author[A.Gimigliano]{Alessandro Gimigliano}
\address[A.Gimigliano]{Dipartimento di Matematica, Universit\`{a} di Bologna, Bologna, Italy}
\email{alessandr.gimigliano@unibo.it }

\author[Y.S. Shin]{Yong-Su Shin}
\address[Y.S. Shin]{Department of Mathematics, Sungshin Women's University,  Seoul, 136-742, Republic of Korea}
\email{ysshin@sungshin.ac.kr}

\maketitle

%%%%%%%%%%%%%%%%%%%%

\begin{abstract} Let $\lambda =[d_1,\dots,d_r]$ be a partition of $d$.  Consider the variety $\XX_{2,\lambda} \subset \mathbb{P}^N$, $N={d+2 \choose 2}-1$,  parameterizing forms $F\in k[x_0,x_1,x_2]_d$ which are the product of $r\geq 2$ forms $F_1,\dots,F_r$, with $\deg F_i = d_i$. We study the secant line variety $\sigma_2(\XX_{2,\lambda})$, and we determine, for all $r$ and $d$, whether or not such a secant variety is defective.  Defectivity occurs in infinitely many ``unbalanced" cases.  
\end{abstract} 

%%%%%%%%%%%%%%%%%%%%

\section{Introduction}

In 1954 Mammana \cite{M:1} introduced the varieties of {\it reducible plane curves}.  These varieties can be defined as follows:  let $R = k[x_0,x_1, x_2] = \oplus_{i \geq 0} R_i$ ($k = \overline k$ an algebraically closed field) and let $\lambda = [d_1, d_2, \ldots ,d_r]$ be a partition of $d = \sum d_i$ (we write $\lambda \vdash d$ and usually assume $d_1 \geq d_2 \geq \cdots \geq d_r \geq 1$).  

Then, the variety of $\lambda${\it -reducible curves in } $\P^2$, denoted $\X_{2,\lambda}$ is a subvariety of $\P(R_d) = \P^N$ ($N = {d+2\choose 2} - 1$) described by:
$$
\X_{2,\lambda} = \{ [F] \in \P^N \, \mid \, F=F_1\cdots F_r, \deg F_i = d_i \,\}.
$$
(The obvious generalization of these varieties to varieties of reducible hypersurfaces in $\P^n$, for $n>2$, will be denoted by $\X_{n,\lambda}$.  In fact, in recent papers these varieties are often referred to as the varieties of $\lambda${\it-reducible forms}.)

Mammana studied various geometric properties of $\X_{2,\lambda}$ such as its degree, order and singularities.

Not much more was done with these varieties until recently, when they were seen to be extremely useful in the study of vector bundles on surfaces (see \cite{CF:1,P:1}), and in studies related to the classical Noether-Severi-Lefschetz Theorem for general hypersurfaces in $\P^n$ (see \cite{CCG:1, CCG:2}).  In this more modern study of these varieties, secant and join varieties of varieties of $\lambda$-reducible forms have played a key role.  

Indeed, secant and join varieties (including "higher" secant varieties, i.e. varieties of secant $t$ dimensional linear spaces, $t>1$) of most classical varieties have been extensively studied in recent years in part because of their wide-ranging applications in Communications Theory, Complexity Theory and Algebraic Statistics as well as to problems in classical projective geometry and commutative algebra.  This is clearly seen in the following books and papers as well as in their ample bibliographies  (see  \cite{AOP, AH:1, BCS, CGG:3, CGG:6, CS:1,  L:1, R:1, SS:1, V:1}). 

One of the first things considered about secant varieties (both the secant line variety and the higher secant varieties) is their dimension.  The references mentioned above show the great progress made in the last few decades on these questions for Segre, Veronese and Grassmann varieties.  In the case of the varieties of reducible forms there is much less known.

The first significant results about the secant varieties of the varieties of $\lambda$-reducible forms were obtained by Arrondo and Bernardi in \cite{AB} for the case $\lambda = [1, 1, \ldots , 1]$ (where the variety is referred to as the variety of {\it split} or {\it completely reducible} forms).  They found the dimension of these secant varieties for a very restricted, but infinite, class of secant varieties.  This was followed by work of Shin \cite{S:1}  who found the dimensions of the secant line varieties to the variety of split plane curves of any degree.  This was generalized by Abo \cite{A}, again for split curves, to  a determination of the dimensions of all the higher secant varieties.  In the same paper Abo was also able to deal with certain secant varieties for split surfaces in $\P^3$ and split cubic hypersurfaces in $\P^n$.  In all the cases considered, the secant varieties were shown to have the {\it expected dimension}.   Arrondo and Bernardi have speculated if that would always be the case for the varieties of split forms.

In this paper we consider the case of $\lambda$-reducible plane curves of any degree and for every partition $\lambda$.  We find the dimensions of all the secant line varieties in this case.  In sharp contrast to the varieties of split curves,we completely classify those secant line varieties which have the expected dimension and exactly what the defect is for those secant line varieties that do not have the expected dimension.  Roughly speaking we find that the secant line varieties do not have the expected dimension when the partition $\lambda$ is "unbalanced", i.e. if $d_1\geq d_2 + \cdots + d_r$.

\medskip 
The structure of the paper is as follows: in  Section 2 we give a description of how the problem (via Terracini's Lemma) reduces to finding the dimension of the intersection of two generic tangent spaces to our variety, and how this is represented by ideals of zero dimensional schemes. In Section 3 we state our main result (Theorem 3.1) and describe how its proof works; Section 4 is where the induction procedure and the proof of the theorem are given, via a series of Lemmata. In Section 5 we give a more detailed description of the results, also with the aid of some pictures, while in Section 6 we state a few remarks and open questions.

%%%%%%%%%%%%%%%%%%%%%%%%%%
%%%%%%%%%%%%%%%%%%%%%%%%%%

%%%%%%%%%%%%%%%%%%%%
\section{Preliminaries}

 Let $R = k[x_0, ...,x_n]$, where $k$ is an algebraically closed field, and consider the variety $\XX_{n,\lambda} \subset \PP(R_d) = \PP^N\  (N = {d+n\choose n} -1$) of $\lambda$-reducible forms, i.e.
$$
\XX_{n,\lambda}= \{ [F] \in \PP^N \mid F = F_1\cdots F_r, \deg F_i = d_i \}.
$$

\medskip\noindent At a general point $[F] \in \XX_{n,\lambda}$ we have that the $F_i$ are general.  We can thus assume them to be irreducible, and distinct. In addition, we can assume that the hypersurfaces that the $F_i$ define in $\PP^n$ meet transversally.

\medskip Since the map
$$
\PP(R_{d_1}) \times \cdots \times \PP(R_{d_r}) \longrightarrow \XX_{n,\lambda}
$$
is generically finite, we have that
$$
\dim \XX_{n,\lambda} = \sum_{i=1}^r \bigg[{d_i +n\choose n} -1\bigg] = \bigg[ \sum_{i=1}^r{d_i +n\choose n}\bigg] -r.
$$

\medskip Quite generally, if $\XX \subset \PP^m$ is any variety, the (higher) secant varieties to $\XX$, denoted $\sigma_t(\XX)$, are defined as follows:
$$
\sigma_t(\XX) = \overline{ \{ P \in \PP^m \mid P \in \langle Q_1, \ldots , Q_t \rangle , Q_i \in \XX \hbox{ are distinct} \} },
$$
where the overbar denotes Zariski closure.

\medskip Clearly $\sigma_1(\XX) = \XX$, $\sigma_2(\XX)$ is the closure of the set of points on secant lines to $\XX,\dots, \sigma_t(\XX)$ is the closure of the set of points on secant $\PP^{t-1}$'s to $\XX$.

\medskip Since we will mostly be interested in finding dimensions of secant varieties, we recall the following definition:

\medskip the {\it expected dimension} of $\sigma_t(\XX)$ (obtained by a simple count of parameters) is:
$$
\hbox{exp.dim }\sigma_t(\XX): = \min \{ m , \  t\dim \XX + (t-1) \}.
$$

\medskip It is clear that the {\bf actual} dimension of $\sigma_t(\XX)$ can never exceed exp.dim $\sigma_t(\XX)$ although it could be smaller.  In this latter case we say that $\sigma_t(\XX)$ is {\it defective}.  The difference 
$$
\hbox{exp.dim }\sigma_t(\XX) - \dim \sigma_t(\XX)
$$
is called the  {\it $t$-defect} of $\XX$, and denoted by $\delta_t$. So
$\sigma_t(\XX)$ is {\it  defective } if  $\delta_t$ is positive.

One of the most fundamental tools needed to calculate $\dim \sigma_t(\XX)$ is {\it Terracini's Lemma} \cite{T:1}.  Roughly speaking, this lemma says that for $\XX \subset \PP^m$ and $P_1, \ldots , P_t$ general points on $\XX$ the position of the tangent spaces to $\XX$ at these points can determine the dimension of $\sigma_t(\XX)$.  More precisely, if $T_{P_i}(\XX)$ is the (projectivized)  tangent space to $\XX$ at $P_i$ then
$$
\dim \sigma_t(\XX)  = \dim (T_{P_1}(\XX) + \cdots + T_{P_t}(\XX)).
$$

Inasmuch as our main interest is in calculating dimensions of secant varieties to varieties of reducible forms, Terracini's Lemma indicates that we first have to calculate the tangent spaces at general points of those varieties.  This has been discussed in other papers (see \cite{A,CCG:1,S:1}) and we recall those results here.

\medskip If $ [F] = [F_1F_2\cdots F_r]$ is a general point of $\XX_{n,\lambda}$ and $I_ {F} \subset k[x_0, \ldots ,x_n] = R$ is the ideal generated by the $r$ polynomials $F/F_1, \ldots , F/F_r$ then
$$
T_ {F}(\XX_{n,\lambda}) = \PP( (I_ {F})_d ) .
$$
Moreover, $I_{F} = \cap_{1\leq i<j\leq r}(F_i,F_j)$ (see \cite{PS:1}).  

\medskip Note that if $H(R/I_{F},-)$ is the Hilbert function of the graded ring $R/I_{F}$, then 
$$
\dim   \XX_{n,\lambda}  = \dim T_{F}(\XX_{n,\lambda}) = {d+n\choose n} - H(R/I_ {F}, d) -1.
$$
This alternate description of $\dim  \XX_{n,\lambda} $ will be very useful later in this section.

\medskip In the special case we will consider in this paper, namely $n=2$, the ideal $I_{F}$ defines a scheme  consisting of $D = \sum_{1\leq i<j\leq r}d_id_j$ distinct points in $\PP^2$ (which is a union of several inter-related  sets of points which are complete intersections, defined by $(F_i,F_j)$, $i,j\in \{1,\dots,r\}$).

\medskip In case $\lambda =[1,1, \dots ,1] \vdash r$, $n\geq 2$, the codimension 2 subvarieties of $\PP^n$ defined by ideals of the type $I = \cap_{1\leq i<j\leq r}(L_i, L_j)$ have been studied by several authors.  They were first introduced in  \cite{GMS:1} as extremal points sets with maximal Hilbert function and as the support of a family of ${r\choose2}$ fat points.  The name {\it star configuration} was used for such a set of points.  Other applications of such star configurations have figured prominently in the work of \cite{BH,CV:1, CHT,  DSST}.  Generalizations to higher codimension varieties and to not necessarily linear forms have been considered in \cite{AS:1,GHM:1,GHM:2,PS:1}.

Continuing with the case $n=2$, $t=2$, it will be useful to introduce some additional notation.  In this case, the set of $D$ points of $\PP^2$ defined by the ideal $I_{F}$ will be denoted by $Y_F$.  As mentioned before,
$$
T_{F}(\XX_{2,\lambda}) = \PP((I_{F})_d)
$$
and so
$$
\dim \XX_{2,\lambda} = \dim T_{F}(\XX_{2,\lambda}) = \dim(I_{F})_d -1 \eqno(1)
$$

{\it Claim}: $H(R/I_{F},d) = D$.

\medskip This is a well known fact and can be found, for example, in \cite{AS:1, GHM:1}.   In fact, the entire Hilbert function of the ring $R/I_{F}$ is well known (see the same references).  We give a different proof of this here to illustrate how much the ideal $I_{F}$ resembles a complete intersection ideal (when $[F]$ is a general point of $\XX_{2,\lambda}$).

\medskip\medskip

\begin{prop}
%{\color{red} 
%Let $[F]$ be a general point of $\XX_{2,\lambda}$
%$$
%\begin{array}{llllllllllllllllll}
%\lambda =[d_1,\ldots , d_{r} ], \quad  r\geq 2 ,\\
%d_1\geq d_2 \geq \cdots \geq d_r\geq 1.
%\end{array}
%$$
%Then
%$$
%H(R/I_{F},j) =
%\begin{cases}
%D, & \text{ for } j\ge d-2,\\
%\ds {j+2 \choose 2} - \sum _{i=1}^r {\max \{j-d+d_i ; -1\} +2 \choose 2}, & \text{ for } j\le d-1.
%\end{cases}
%$$
%}

Let $[F]$ be a general point of $\XX_{2,\lambda}$
$$\lambda =[d_1,\ldots , d_{r} ], \hskip 2cm  r\geq 2 ,$$
$$d_1\geq d_2 \geq \cdots \geq d_r\geq 1,$$
then

(i) For $j  \geq d-2$,
 $$H(R/I_{F},j) = D \ ;$$
 
(ii) For $j \leq d-1$,
$$H(R/I_{F},j) = {j+2 \choose 2} - \sum _{i=1}^r {\max \{j-d+d_i ; -1\} +2 \choose 2} .$$

\begin{proof}
First we will prove the proposition for $j=d-2$.
Let
$$v = \max \{ n \in \Bbb N \  | \  d_n >1\}.$$
We have $0 \leq v \leq r$ and, with this notation, 
$$\lambda =[d_1,\ldots , d_{v},1,\ldots ,1 ],$$
where the last $r-v$ entries of the partition are $1$.

Now recall that the ideal  $I_{F}$ is generated by the $r$ polynomials $F/F_1, \ldots , F/F_r$,  and
observe that, for $d_i=1$, the degree of  $F/F_i$  is $d-1$. Now we compute the dimension of $(I_{F})_{d-2}$. 

For $v=0$, that is, for $\lambda =[1,\ldots ,1 ],$ there are no forms in $(I_{F})$ of degree ${d-2}$,  so we have 
$$(I_{F})_{d-2} =(0)
.$$
Hence $H(R/I_{F},d-2) =\dim R_{d-2} = {d\choose 2}.$
Since  in this case $j-d+d_i = d-2-d+1 = -1$, and $D=  {d\choose 2}$,  the conclusion follows.

Now let  $v>0$.
 
We have, in degree $d-2$:  
$$(I_{F})_{d-2} = 
\{ M_1\cdot F/F_1+ \cdots + M_v \cdot F/F_v \ | \ M_i \in R_{d_i-2} 
 \} .$$

Hence
in $(I_{F})_{d-2}$ we have at most 
  $\sum _{i=1}^v {d_i \choose 2}
  $
independent forms, that is,
$$ \dim(I_{F})_{d-2} \leq {d_1 \choose 2} + \cdots + {d_v \choose 2}.
$$
On the other hand, obviously we have (recall that $D=  \sum_{1\leq i<j\leq r}d_id_j$):
$$ \dim(I_{F})_{d-2} \geq { d\choose 2} - D $$
$$= 
{d_1+ \cdots +d_r\choose 2}
- \sum_{1\leq i<j\leq r}d_id_j = 
{d_1 \choose 2} + \cdots + {d_r \choose 2}
$$
$$= 
{d_1 \choose 2} + \cdots + {d_v \choose 2}. $$
Hence  
$$\dim(I_{F})_{d-2} ={d_1 \choose 2} + \cdots + {d_v \choose 2} = {d\choose 2} -D. \eqno (2) $$ 
Thus, for $ j  = d-2$, we have
$$H(R/I_{F},j) = D\ .$$

Since  in degree $d-2$ the Hilbert function of the ring $R/I_{F}$ is equal to  the multiplicity $D$ of the scheme $Y_F$, then we also have  $H(R/I_{F},j) = D$,   for $j>d-2$, 
and this completes the proof of  (i).

By noticing that for $j=d-2$,  by (2) we have
$${j+2 \choose 2} - \sum _{i=1}^r {\max \{j-d+d_i ; -1\} +2 \choose 2} =
{d \choose 2} - \sum _{i=1}^v {d_i \choose 2} =D,$$
and for $j=d-1$,  we have
$${j+2 \choose 2} - \sum _{i=1}^r {\max \{j-d+d_i ; -1\} +2 \choose 2} =
{d +1 \choose 2} - \sum _{i=1}^r {d_i+1 \choose 2} $$
$$= \frac {(d_1+ \cdots +d_r+1)(d_1+ \cdots +d_r)} 2-  \sum _{i=1}^r \frac {(d_i+1)d_i}2 
=\sum_{1\leq i<j\leq r}d_id_j = D \ ,
$$
 then in cases $j = d-2$ and $j=d-1$,  we are done also with (ii).

Note that since in degree $d-1$ the dimension of
$I_F$ is exactly the sum of what is generated in degree  $d-1$ by each of the generators of $I_F$, then the same happens for all degrees $j<d-1$, i.e. $H(R/I_{F},j)$ grows as much as possible before degree $d-1$.

By this observation it follows that  the assertion of (ii) also holds  for $j < d-2$.

\end{proof}
\end {prop}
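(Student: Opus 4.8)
The plan is to use throughout the two descriptions of $I_F$ recalled above. First, $I_F$ is generated by the $r$ forms $G_i:=F/F_i$, with $\deg G_i=d-d_i\le d-1$, so for every $j$ one has $(I_F)_j=\sum_{i=1}^r R_{j-d+d_i}\,G_i=\operatorname{im}\mu_j$, where
\[
\mu_j\colon\ \bigoplus_{i=1}^r R_{j-d+d_i}\longrightarrow R_j,\qquad (M_i)\longmapsto\sum_{i=1}^r M_iG_i .
\]
Second, $I_F=\bigcap_{i<j}(F_i,F_j)$ is, for a general $[F]$, the (radical, saturated) ideal of a reduced set $Y_F$ of $D=\sum_{i<j}d_id_j$ points of $\PP^2$: for general forms the complete intersections $V(F_i,F_j)$ are pairwise disjoint, since three general plane curves have no common point. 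Hence $H(R/I_F,j)\le D$ for all $j$, with equality for $j\gg0$, and the Hilbert function of $R/I_F$ --- the $1$-dimensional Cohen--Macaulay coordinate ring of the points $Y_F$ --- is non-decreasing. I will compute $H(R/I_F,d-2)$ and then propagate that value upward (for (i)) and downward (for (ii)).

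\emph{Degree $d-2$.} Since $(I_F)_{d-2}=\operatorname{im}\mu_{d-2}$, there is the free upper bound $\dim(I_F)_{d-2}\le\sum_{i=1}^r\dim R_{d_i-2}=\sum_{i=1}^r\binom{d_i}{2}$ (terms with $d_i\le1$ vanishing). On the other hand $H(R/I_F,d-2)\le D$ forces $\dim(I_F)_{d-2}\ge\binom{d}{2}-D$. Expanding $\binom{d}{2}$ with $d=\sum_i d_i$ gives the identity $\binom{d}{2}=\sum_i\binom{d_i}{2}+D$, so the two bounds agree: $\dim(I_F)_{d-2}=\binom{d}{2}-D$ and $H(R/I_F,d-2)=D$. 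As the Hilbert function of $Y_F$ is non-decreasing and bounded by $\deg Y_F=D$, it equals $D$ in every degree $j\ge d-2$; this is assertion (i) (in particular the Claim $H(R/I_F,d)=D$), and it also records $H(R/I_F,d-1)=D$.

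\emph{Part (ii).} For $j\le d-1$ the map $\mu_j$ is onto $(I_F)_j$ because every $G_i$ has degree $\le d-1$; the real point is that $\mu_j$ is injective. In degree $d-1$,
\[
\dim(I_F)_{d-1}=\binom{d+1}{2}-H(R/I_F,d-1)=\binom{d+1}{2}-D=\sum_{i=1}^r\binom{d_i+1}{2},
\]
the last equality being the companion identity $\binom{d+1}{2}=\sum_i\binom{d_i+1}{2}+D$; as this is exactly $\dim\bigoplus_i R_{d_i-1}$, the surjection $\mu_{d-1}$ is an isomorphism. Now let $j<d-1$ and $(M_i)\in\ker\mu_j$; for any nonzero linear form $\ell$ the tuple $(\ell^{\,d-1-j}M_i)$ lies in $\ker\mu_{d-1}=0$, hence $\ell^{\,d-1-j}M_i=0$, and $R$ being a domain $M_i=0$ for all $i$. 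Thus $\mu_j$ is injective for every $j\le d-1$, so $\dim(I_F)_j=\sum_i\dim R_{j-d+d_i}=\sum_i\binom{\max\{j-d+d_i;-1\}+2}{2}$, which is the formula in (ii).

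\emph{Main obstacle.} The only step with genuine content is the degree $d-2$ coincidence: the purely formal upper bound $\sum_i\binom{d_i}{2}$ for $\dim(I_F)_{d-2}$ must meet the geometric lower bound $\binom{d}{2}-D$. This is precisely the statement that at a general $[F]$ the generators $G_i$ of $I_F$ are as independent as possible in low degrees --- the sense in which $I_F$ ``resembles a complete intersection''. After that, everything is a one-line binomial identity or a formal consequence: monotonicity of the point-scheme Hilbert function pushes the value up, and the domain/syzygy trick pushes it down. What still needs a routine check is that $Y_F$ really is reduced of degree $D$ for general $F$, together with the degenerate case $v=0$ (all $d_i=1$), where $(I_F)_{d-2}=(0)$ and $H(R/I_F,d-2)=\binom{d}{2}=D$ is read off at once.
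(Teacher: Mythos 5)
Your proof is correct and follows essentially the same route as the paper's: you squeeze $\dim(I_F)_{d-2}$ between the free upper bound $\sum_i\binom{d_i}{2}$ and the geometric lower bound $\binom{d}{2}-D$, push the value of $H(R/I_F,\cdot)$ upward by monotonicity of the Hilbert function of the point scheme $Y_F$, and push it downward by noting the generators admit no syzygies in degrees $\leq d-1$. Your only real addition is to make the last step rigorous --- the paper merely observes that independence of the generators in degree $d-1$ forces independence in lower degrees, whereas you prove it via the injectivity of $\mu_{d-1}$ and multiplication by $\ell^{\,d-1-j}$; this is a welcome sharpening of the same idea, not a different argument.
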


\medskip

In light of the Claim, we can rewrite $(1)$ as
$$
\dim \XX_{2,\lambda} = {d+2\choose 2} - D - 1. 
$$

As we noted earlier (Terracini's Lemma), if $[F]$ and $[G]$ are two general points on $\XX_{2,\lambda}$, then 
$$
\dim \sigma_2(\XX_{2,\lambda}) = \dim ( (I_{F})_d + (I_{G})_d) -1  \eqno{(3)}
$$

By Grassmann's formula
$$
\dim( (I_{F})_d + (I_{G})_d) = \dim (I_{F})_d + \dim  (I_{G})_d - \dim(I_{F} \cap I_{G})_d .
$$

Using $(3)$ above, we obtain
$$
\dim \sigma_2(\XX_{2,\lambda}) = \dim  (I_{F})_d + \dim  (I_{G})_d - \dim(I_{F} \cap I_{G})_d - 1
$$
$$
= 2 \dim\XX_{2,\lambda} + 1 - \dim(I_{F} \cap I_{G})_d .
$$

\medskip Thus, if $\hbox{exp.dim }\sigma_2(\XX_{2,\lambda}) = 2 \dim  \XX_{2,\lambda} +1$  then the 2-defect of $\XX_{2,\lambda}$ is
$$
(2\dim\XX_{2,\lambda} + 1) - (2\dim\XX_{2,\lambda} + 1 - \dim(I_{F} \cap I_{G})_d ) = \dim(I_{F} \cap I_{G})_d .
$$

\medskip If, on the other hand, $\hbox{exp.dim } \sigma_2(\XX_{2,\lambda}) = {d+2\choose 2} - 1$, then the 2-defect of $\XX_{2,\lambda}$ is
$$
\left[ {d+2\choose 2} - 1 \right ] - \bigl[ 2\dim\XX_{2,\lambda} + 1 - \dim(I_{F} \cap I_{G} )_d \bigr] . \eqno{(4)}
$$

But, we noted that $\dim \XX_{2,\lambda} ={d+2\choose 2} - D - 1$, so $(4)$ becomes
$$
 {d+2\choose 2} - 1  -  2\left ({d+2\choose 2} - D - 1\right )- 1 +\dim(I_{F} \cap I_{G} )_d  =
$$
$$
= 2D - {d+2\choose 2} +  \dim(I_{F} \cap I_{G})_d .
$$

So, there is a positive 2-defect in this case if and only if
$$
 \dim(I_{F} \cap I_{G})_d > {d+2\choose 2} - 2D.
$$

\medskip
We summarize the discussion above as follows: 

\medskip
If $\hbox{exp.dim } \sigma_2(\XX_{2,\lambda}) = 2 \dim  \XX_{2,\lambda} +1$, i.e. if $2{d+2\choose 2} - 2D -1 \leq N = {d+2\choose 2} -1$, that is if ${d+2\choose 2} - 2D \leq 0$, then the  2-defect of 
$\XX_{2,\lambda}$ is 
$$\delta_2 := \dim(I_{F}\cap I_{G})_d . $$

If $\hbox{exp.dim } \sigma_2(\XX_{2,\lambda}) = {d+2\choose 2} - 1$, i.e. if ${d+2\choose 2} - 2D > 0$, then $\XX_{2,\lambda}$ has a positive 2-defect if and only if 
$$\dim(I_{F}\cap I_{G})_d > {d+2\choose 2} - 2D , $$
and the 2-defect is:
$$\delta_2 = \dim (I_{F}\cap I_{G})_d -{d+2\choose 2}+2D. $$
 
\bigskip
Now, if we consider $I_{F}\cap I_{G}$, we have that $I_{F} \cap I_{G} = I_{Y_F \cup Y_G}$, where $Y_F$ and $Y_G$ are two sets of $D$ points, both union of related complete intersection: if $[F]=[F_1,\ldots , F_r]$,  then $I_{Y_F} = \cap _{1\leq i<j\leq r} (F_i,F_j)$, and similarly for $[G]$.

We might expect that $Y_F\cup Y_G$ imposes $2D$ independent conditions to curves of degree $d$, thus we have that

$$
\hbox{exp.dim}(I_{Y_F\cup Y_G})_d = \max \left \{{d+2\choose 2} - 2D\ ; 0 \right \} .
$$
Of course the actual dimension of $(I_{Y_F\cup Y_G})_d$ can be bigger.  We write
$$
\delta = \dim (I_{Y_F\cup Y_G})_d - \hbox{exp.dim} (I_{Y_F\cup Y_G})_d  \ , \  (\delta \geq 0).
$$

\medskip
So, when ${d+2 \choose 2} - 2D \leq 0$, we have $\delta_2 = \dim(I_{Y_F\cup Y_G})_d = \delta$, while if ${d+2 \choose 2} -2D >0$ we have  $\delta_2 = \dim (I_{Y_F\cup Y_G})_d - {d+2 \choose 2}+ 2D = \delta .$ 

In conclusion, we get:

\begin{prop} \label{difetto}  Let   
$$
\begin{array}{clllllllllllllllll} 
\delta_2 & =  & \hbox{\rm exp.dim } \sigma_2(\mathbb{X}_{2,\lambda}) - \dim \sigma_2( \mathbb{X}_{2,\lambda}) , \\[.2ex] 
\delta  & =  & \dim(I_{Y_F\cup Y_G})_d-\hbox {\rm exp.dim }(I_{Y_F\cup Y_G})_d.
\end{array}
$$
Then we have: $\delta_2=\delta$. 

%The original form is below:
%
%$$\delta_2 =  \hbox{\rm exp.dim } \sigma_2(\mathbb{X}_{2,\lambda}) - \dim \sigma_2( \mathbb{X}_{2,\lambda}) , \ \  \delta = \dim(I_{Y_F\cup Y_G})_d-
%\hbox {\rm exp.dim }(I_{Y_F\cup Y_G})_d.$$
%Then we have: $\delta_2=\delta$.
\end{prop}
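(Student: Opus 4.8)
The plan is to prove $\delta_2=\delta$ by a short case analysis on the sign of ${d+2\choose 2}-2D$, assembling ingredients already in hand: Terracini's Lemma, Grassmann's formula, the identity $\dim\XX_{2,\lambda}={d+2\choose 2}-D-1$ (from the Claim $H(R/I_F,d)=D$), and the scheme-theoretic identification $I_F\cap I_G=I_{Y_F\cup Y_G}$. The real content is simply that the defect measured on $\sigma_2(\XX_{2,\lambda})$ and the defect measured on the zero-dimensional scheme $Y_F\cup Y_G$ are the same number; the proof is the explicit bookkeeping that makes this translation precise.

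First I would fix two general points $[F],[G]\in\XX_{2,\lambda}$ and record the master identity obtained by combining Terracini's Lemma, equation $(3)$, and Grassmann's formula, namely
$$
\dim\sigma_2(\XX_{2,\lambda}) = 2\dim\XX_{2,\lambda}+1-\dim(I_F\cap I_G)_d .
$$
Substituting $\dim\XX_{2,\lambda}={d+2\choose 2}-D-1$ together with $(I_F\cap I_G)_d=(I_{Y_F\cup Y_G})_d$ turns this into
$$
\dim\sigma_2(\XX_{2,\lambda}) = 2{d+2\choose 2}-2D-1-\dim(I_{Y_F\cup Y_G})_d .
$$

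Next I would split according to the $\min$ in the definition of expected dimension. If ${d+2\choose 2}-2D\le 0$, then $2\dim\XX_{2,\lambda}+1\le N$, so $\hbox{exp.dim }\sigma_2(\XX_{2,\lambda})=2\dim\XX_{2,\lambda}+1$ and at the same time $\hbox{exp.dim }(I_{Y_F\cup Y_G})_d=\max\{{d+2\choose 2}-2D,0\}=0$; hence $\delta_2=(2\dim\XX_{2,\lambda}+1)-\dim\sigma_2(\XX_{2,\lambda})=\dim(I_{Y_F\cup Y_G})_d=\delta$. If instead ${d+2\choose 2}-2D>0$, then $\hbox{exp.dim }\sigma_2(\XX_{2,\lambda})=N={d+2\choose 2}-1$ and $\hbox{exp.dim }(I_{Y_F\cup Y_G})_d={d+2\choose 2}-2D$; feeding the master identity into $\delta_2=N-\dim\sigma_2(\XX_{2,\lambda})$ gives $\delta_2=\dim(I_{Y_F\cup Y_G})_d-{d+2\choose 2}+2D=\delta$. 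In either case $\delta_2=\delta$, which is the assertion.

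The only step that is not purely formal is the justification, for general $[F]$ and $[G]$, of $I_F\cap I_G=I_{Y_F\cup Y_G}$ with $Y_F$ and $Y_G$ disjoint reduced sets of $D$ points each: this is what allows $\dim(I_F\cap I_G)_d$ to be read as the dimension of the linear system of degree-$d$ curves through the $2D$ points of $Y_F\cup Y_G$, and it is the one place where the generality of the factorizations (irreducibility, distinctness, transversality of the component curves, already noted in Section 2) is used. I expect this to be the main — though modest — obstacle; everything else is arithmetic with Hilbert functions and with the two instances of "expected dimension."
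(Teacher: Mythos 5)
Your proposal is correct and follows essentially the same route as the paper: Terracini plus Grassmann gives the master identity $\dim\sigma_2(\XX_{2,\lambda})=2\dim\XX_{2,\lambda}+1-\dim(I_F\cap I_G)_d$, and the case split on the sign of ${d+2\choose 2}-2D$ matches the paper's own bookkeeping verbatim, including the identification $I_F\cap I_G=I_{Y_F\cup Y_G}$ (which the paper, like you, grounds in the generality of the factorizations from Section 2). No gaps.
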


\medskip
In the next section we will exploit Proposition \ref{difetto}  in order to prove our main result (see Theorem \ref {theorem}).

%%%%%%%%%%%%%%%%%%%%

%%%%%%%%%%%%%%%%%%%%%%%%%%
\section{The main theorem}
%%%%%%%%%%%%%%%%%%%%%%%%%%

We fix the following notation: 
$$
\begin{array}{llllllllllllllll}
\lambda =[d_1,\ldots , d_r], \quad  r\geq 2 ,\\
d_1\geq d_2 \geq \cdots \geq d_r\geq 1,\\
d= d_1+\cdots + d_r,\\
D=  \sum _{1\leq i<j\leq r}d_id_j .
\end{array}
$$

%The original form is below:

%$$\lambda =[d_1,\ldots , d_r], \hskip 1cm  r\geq 2 ,$$
%$$d_1\geq d_2 \geq \cdots \geq d_r\geq 1$$
%$$d= d_1+\cdots + d_r,$$
%$$D=  \sum _{1\leq i<j\leq r}d_id_j .$$
For any $e= 1,\dots,r$, let
 $$
\begin{array}{lllllllllllllll}
s_e  =  \ds\sum_{ i\neq e } d_i ;\\
p_e  =  D - d_es_e .
\end{array}
$$

%The original form is below:

%$$s_e=
%\sum_{ i\neq e } d_i ;$$
%$$ p_e = D - d_es_e .$$
Note that 
$$ p_e = 
\left \{
 \begin {matrix}
\ds\sum _{\substack{1\leq i<j\leq r \\  i,j \neq e }}d_id_j,  & \hbox {for } \ r>2, \\
 0,  & \hbox {for } \ r=2.
 \end {matrix}
\right.
$$

%The original form is below:

%$$ p_e = 
%\left \{
% \begin {matrix}
%\sum _{1\leq i<j\leq r\ ; \ i,j \neq e }d_id_j  & \hbox {for } \ r>2 \\
%& \\
% 0  & \hbox {for } \ r=2
% \end {matrix}
%\right.
%$$

If $e=1$, we simplify the notation by writing
$s$ for $s_1$ and $p$ for $p_1$. So
 $$
\begin{array}{lllllllllllll} 
s = d_2+\cdots + d_r,\\[1ex] 
p=  D-d_1 s = \left \{
 \begin {matrix}
\ds\sum _{2\leq i<j\leq r}d_id_j,  & \hbox {for } \ r>2, \\
& \\
 0,  & \hbox {for } \ r=2.
 \end {matrix}
\right. 
\end{array}
$$

%The original form is below:

%$$s = d_2+\ldots + d_r,$$
%$$
% p=  D-d_1 s = \left \{
% \begin {matrix}
%\sum _{2\leq i<j\leq r}d_id_j  & \hbox {for } \ r>2 \\
%& \\
% 0  & \hbox {for } \ r=2
% \end {matrix}
%\right. .
%$$
\medskip

 If $d_i \geq a_i$, for all $i$, $1 \leq i \leq r$,  $a_i \in \Bbb N$,  we will write $$ [d_1,\ldots , d_r]\succeq [a_1,\ldots , a_r].$$
 
Let $  \mathbb{X}_{2,\lambda}$, $[F]$, $[G]$, $Y_F$, $Y_G$ be as in Section 2.
We set 
$$Z= Y_F\cup Y_G,$$
that is, $Z$ is the   scheme-theoretic union of two sets of $D$ points of $\PP ^2$ 
 defined, respectively, by the ideals $I_ {F}$ and $I_ {G}$ (see Section 2) 
of the two  general points $[F]$ and $[G]$ of
 $\XX_{2,\lambda}$. In this case we simply say that 
 {\it the scheme $Z$ is associated to the partition $\lambda$}.

We will prove the following:

\begin{thm}\label{theorem} 
 $\sigma_2(\XX_{2,\lambda})$ is defective if and only if $d_1\geq s$ and $2p-3s>0$. 
\end{thm}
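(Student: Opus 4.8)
By Proposition \ref{difetto} the problem is entirely reduced to computing $\dim (I_Z)_d$ for the scheme $Z = Y_F \cup Y_G$ associated to $\lambda$, and comparing it with the expected value $\max\{\binom{d+2}{2} - 2D, 0\}$. So the whole proof is a (delicate) computation of the Hilbert function of a union of two ``generic'' overlapping nets of complete intersections in $\mathbb{P}^2$. I would structure it as an induction, peeling off one factor at a time: write $\lambda = [d_1, \lambda']$ with $\lambda' = [d_2,\dots,d_r]$ and relate the scheme $Z$ for $\lambda$ to the scheme $Z'$ for $\lambda'$. The key geometric observation is that $Y_F$ decomposes as the complete intersection $C_F := F_1 \cap (F_2\cdots F_r)$ (which is $d_1 s$ points on the curve $F_1$) together with the ``residual'' scheme $Y'_F$ defined by $\cap_{2\le i<j\le r}(F_i,F_j)$, which has $p$ points; and similarly for $G$. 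Thus $Z$ is built from $2d_1 s$ points lying on the two degree-$d_1$ curves $F_1, G_1$ plus the scheme $Z' = Y'_F \cup Y'_G$ of $2p$ points associated to the subpartition $\lambda'$.

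\textbf{The induction step via Castelnuovo/residuation.} The engine is the exact sequence obtained by residuation with respect to a suitable curve. The natural choice is to residuate $Z$ with respect to $F_1$ (a general curve of degree $d_1$): one gets
$$
0 \to (I_{\mathrm{Res}_{F_1} Z})_{d - d_1} \xrightarrow{\ \cdot F_1\ } (I_Z)_d \to (I_{Z \cap F_1})_{d, F_1} \to 0,
$$
so that $\dim (I_Z)_d = \dim (I_{W})_{d-d_1} + \dim (I_{Z\cap F_1})_{d,F_1}$, where $W = \mathrm{Res}_{F_1}(Z)$ and the last term is computed on the curve $F_1 \cong \mathbb{P}^1$-ish (a plane curve of degree $d_1$, where Hilbert functions of points are understood). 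The trace $Z \cap F_1$ consists of the $d_1 s$ points of $Y_F$ on $F_1$ together with the points of $Y_G$ that happen to lie on $F_1$ — generically none, unless forced — so one must carefully identify which points of $Z$ actually sit on $F_1$. The residual $W$ is then $Y_G$ together with the part of $Y_F$ off $F_1$, namely $Y'_F$; so $W = Y'_F \cup Y_G$, a ``hybrid'' scheme. Iterating (residuate again, now with respect to $G_1$, etc.) one reduces both $d_1$ and $s$ and eventually lands on the base cases $r=2$ (two pairs of complete intersections, i.e. $Z$ is four generic complete-intersection-type point sets) and on small balanced partitions, which must be checked by hand or by a direct dimension count. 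The inequalities $d_1 \ge s$ and $2p - 3s > 0$ should emerge precisely as the thresholds at which the trace term $\dim(I_{Z\cap F_1})_{d,F_1}$ stops being ``full'' (i.e. the $d_1 s$ points on a degree-$d_1$ curve fail to impose independent conditions in the relevant degree) or at which the residual scheme $W$ is itself defective.

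\textbf{Organization of cases.} Concretely I expect the proof to split along: (a) the \emph{balanced} range $d_1 < s$, where one shows $\sigma_2$ has the expected dimension — here the points of $Z$ should impose independent conditions (or leave the expected excess), and one proves $\dim(I_Z)_d = \max\{\binom{d+2}{2}-2D, 0\}$ by a clean induction with no defect ever appearing; (b) the \emph{unbalanced} range $d_1 \ge s$ with $2p \le 3s$, where again one proves non-defectivity but now the count is tighter because the curves $F_1, G_1$ of large degree $d_1$ carry ``too many'' of the points; and (c) the range $d_1 \ge s$ and $2p - 3s > 0$, where one exhibits the actual defect $\delta = \delta_2 > 0$ by finding the forbidden curve(s) of degree $d$ through $Z$ — presumably something like $F_1 G_1$ times an extra curve through the residual points, forcing $(I_Z)_d$ to be larger than expected. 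Producing that explicit element, and matching its contribution to the predicted value of $\delta$, is the crux of the ``only if'' direction.

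\textbf{Main obstacle.} The hard part will be bookkeeping in the residuation induction: keeping precise track of which of the $2D$ points of $Z$ land on the curve being residuated against, handling the ``hybrid'' intermediate schemes $Y'_F \cup Y_G$ that are not themselves of the form ``$Z$ associated to a partition'', and controlling the Hilbert function on the (possibly singular, but generically smooth) plane curve $F_1$ of degree $d_1$ — one needs a sharp statement about when $N$ general points on a smooth plane curve of degree $d_1$ impose independent conditions on curves of a given degree, and where that fails. The transition value $2p - 3s = 0$ is subtle and I expect it to require a separate, careful near-boundary analysis (a sequence of Lemmata, as the introduction promises) rather than falling out of a single inductive formula.
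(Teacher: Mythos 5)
Your overall architecture matches the paper's: reduce everything to $\dim(I_Z)_d$ via Proposition \ref{difetto}, induct by lowering the partition, split into the ranges $d_1<s$, $d_1\ge s$ with $2p-3s\le 0$, and $d_1\ge s$ with $2p-3s>0$, and in the last range exhibit explicit forms witnessing the defect. But the engine you propose does not work as stated, and the missing idea is exactly the one the paper supplies. Residuating $Z$ with respect to the full degree-$d_1$ curve $F_1$ produces the hybrid residual $Y_F'\cup Y_G$, which is not the scheme associated to any partition, so your induction does not close; you flag this as the ``main obstacle'' but offer no way around it, and you would in addition need surjectivity of the restriction map (your displayed sequence is only left exact on global sections without an $H^1$ vanishing) and a sharp control of the Hilbert function of $d_1s$ points on a plane curve of degree $d_1$. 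The paper's Remark \ref{specialization} sidesteps all of this: it specializes by replacing $F_e$ with $l\cdot F_e'$ and $G_e$ with $l\cdot G_e'$ for the \emph{same} generic line $L=\{l=0\}$, and then residuates with respect to $L$. By Bez\'out (Lemma \ref{residuo}), $L$ is a fixed component of $(I_{\widetilde Z})_d$ whenever $\deg(\widetilde Z\cap L)=2s_e>d$, so there is no trace term to compute at all, and the residual scheme is exactly the one associated to $[d_1,\dots,d_e-1,\dots,d_r]$: the induction stays inside the family and drops $d$ by only $1$ at each step. When $d_1\ge s-1$ one adds $d_1-s+1$ extra collinear points to force the same conclusion, at the cost of the correction term $d_1-s+1$ in Lemma \ref{residuo}(ii).

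Your candidate for the explicit element producing the defect, ``$F_1G_1$ times an extra curve through the residual points,'' is wrong on two counts: $F_1G_1$ already has degree $2d_1\ge d$ in the unbalanced range, leaving no room for the extra factor, and $F_1$ does not vanish on the points of $F_i\cap F_j$ for $2\le i<j\le r$, so no multiple of $F_1G_1$ of degree $d$ lies in $I_Z$. The correct element (Lemma \ref{d1>=s-1,2p>3s}) is $F_2\cdots F_r\cdot G_2\cdots G_r$, of degree $2s\le d$, which visibly vanishes on all of $Z$ and yields $\binom{d_1-s+2}{2}$ independent forms in $(I_Z)_d$; comparing this with the expected dimension $\max\bigl\{0,\binom{d_1-s+2}{2}+3s-2p\bigr\}$ is precisely where the threshold $2p-3s>0$ comes from. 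Without that element, and without an induction that stays within the family of partition schemes, the proposal remains a plausible plan rather than a proof.
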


 {\it Sketch of the proof of Theorem \ref{theorem}.} First note that in  case $r=2$ we never have $2p-3s>0$. This case is dealt with in Lemma \ref{r=2}, where it is proved that, in this case, we always have $\delta_2=0$. 

Given the relationship between the defect of $\sigma_2(\XX_{2,\lambda})$ and the defect of $(I_Z)_d$ established in Proposition \ref{difetto}, the theorem will be proved by working on $\delta$. 

The main point in the theorem is the importance of the condition $d_1\geq d_2+\cdots +d_r$. What happens is that, by working on the examples, it is easy to realize that in most of the cases, when $d_1=d_2+\cdots +d_r$ (so $ d= 2d_1$), the expected dimension of $I_Z$ is $0$, while it is immediate to notice that there is a form of degree $d$ in $I_Z$, hence $\delta \geq 1$. A few computations show that in this case the condition exp.dim $(I_Z)_d=0$ is equivalent to $2p-3s>0$.

For this reason we will consider the partitions $\lambda = [d_1,\dots,d_r] $ with respect to the hyperplane $\mathbb{H} = \{d_1=d_2+\cdots+d_r\}$. We have to prove that if $\lambda$ is ``below $\mathbb{H}$''  (with respect to the $d_1$ direction , i.e. if $d_1<d_2+\cdots +d_r$), then $\delta = 0$, while when we consider $\lambda$ ``above $\mathbb{H}$'' (i.e. with $d_1\geq d_2+\cdots +d_r$) we prove that $\delta =0$ if and only if $2p-3s\leq 0$.

We will use a specialization of $Z$ (described in Remark \ref{specialization}) which allows us (see Lemmata \ref{residuo} and \ref{alpha}) to pass from a point $\lambda$ to another $\lambda '\preceq \lambda$ and relate the dimensions of the spaces $(I_Z)_d$ and $(I_{Z'})_{d'}$ associated to them. In this way, when  $\lambda$ is ``below $\mathbb{H}$'' we can work our way down to a few ``minimal'' cases for which we can directly compute  $\dim (I_Z)_d$ (e.g. using CoCoA \cite{RABCP}), and this way we get that $\delta = 0$ for all  $\lambda$  ``below $\mathbb{H}$'' (see Lemmata \ref{d1<s,I=0}, \ref {(a,a,1)(a+1,a,1)}, \ref {d1<s,I>0}, which we summarize in  Proposition \ref {d1<s}).
  
 Eventually, we will work  ``on $\mathbb{H}$ and above'' (i.e., for $d_1\geq s$), showing,  in that case, that we have $\delta >0$ if and only if $2p-3s > 0$. By using Lemma \ref{residuo} again, we will prove that when $2p-3s\leq 0$ we can ``descend'' to cases with $d_1=s-1$ (below the hyperplane $\mathbb{H}$) and get $\delta =0$ (see Lemma  \ref {d1>=s-1,2p<=3s}), while for $2p-3s > 0$ we can prove that $\delta >0$ by considering forms  in $(I_Z)_d$ which come from the particular structure of $Y_F \cup Y_G$ (see Lemma \ref {d1>=s-1,2p>3s}).

\medskip

Once we find all the defective $\sigma_2(\XX_{2,\lambda})$ we also get a precise description of the defect.

\bigskip                                                                                                                                            
\begin{prop}  \label {cor} When $\sigma_2(\XX_{2,\lambda})$ is defective we have 

$$\delta_2 =  \left \{
\begin {matrix} 
2p-3s            & \hbox { if } {d+2 \choose 2}-2D>0, \\[1ex]
{d_1-s+2 \choose 2} &   \hbox { if }  {d+2 \choose 2} -2D \leq 0 .
\end{matrix}
 \right. $$
\end{prop}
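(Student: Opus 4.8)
The plan is to read off the defect $\delta_2$ directly from the quantities computed during the proof of Theorem \ref{theorem}, using the dictionary of Proposition \ref{difetto}, namely $\delta_2 = \delta = \dim(I_Z)_d - \hbox{exp.dim}(I_Z)_d$. By Theorem \ref{theorem} we are in the defective case, so $d_1 \geq s$ and $2p - 3s > 0$; in particular $r \geq 3$ and $d = d_1 + s$. The argument splits according to whether ${d+2\choose 2} - 2D$ is positive or not, which (since $2D = {d+2\choose 2} - \delta_2 + \delta_2$ is not quite the right reformulation) should instead be phrased as: ${d+2\choose 2} - 2D \leq 0$ means the generic $\sigma_2$ is expected to fill the ambient space, while ${d+2\choose 2} - 2D > 0$ means it is not. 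First I would isolate, as a lemma or as part of the proof of Theorem \ref{theorem}, the exact value of $\dim(I_Z)_d$ in the defective range; the structural input is that the ``extra'' forms in $(I_Z)_d$ come from the special geometry of $Y_F \cup Y_G$ — specifically, when $d_1 \geq s$, the two curves $F_1$ and $G_1$ (the top-degree factors) share enough of the colinear/complete-intersection structure that products $F_1 G_1 \cdot (\text{forms of degree } d - d_1 - d_1)$ — here $d - 2d_1 = s - d_1 \leq 0$, so one must be more careful — actually the relevant forms are those vanishing on $Z$ coming from curves containing appropriate unions of the complete intersections $(F_i,F_j)$ and $(G_i,G_j)$.

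Concretely, the two cases are handled as follows. When ${d+2\choose 2} - 2D > 0$, the expected dimension of $(I_Z)_d$ is ${d+2\choose 2} - 2D$, so $\delta_2 = \dim(I_Z)_d - \big({d+2\choose 2} - 2D\big)$, and the claim $\delta_2 = 2p - 3s$ amounts to showing $\dim(I_Z)_d = {d+2\choose 2} - 2D + 2p - 3s$. This identity should fall out of the computation in Lemma \ref{d1>=s-1,2p>3s}, where the defective forms are produced explicitly: one exhibits a linear system of the asserted dimension inside $(I_Z)_d$ and shows, via the specialization of Remark \ref{specialization} together with Lemmata \ref{residuo} and \ref{alpha}, that there are no further forms. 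When ${d+2\choose 2} - 2D \leq 0$, the expected dimension is $0$, so $\delta_2 = \dim(I_Z)_d$, and one must show this equals ${d_1 - s + 2 \choose 2}$. Here the picture is cleanest: the scheme $Z = Y_F \cup Y_G$ has all $2D$ of its points lying on the union of the two curves $F = F_1\cdots F_r$ and $G = G_1 \cdots G_r$ in a constrained way, and when $d_1 \geq s$ the residual analysis shows that every degree-$d$ form vanishing on $Z$ must be divisible by a fixed form of degree $d_1 - s$ (coming from the part of $F_1$, $G_1$ that is forced), with the complementary factor ranging over all forms of degree $d - (d_1 - s) = 2s$ that vanish on a residual scheme imposing independent conditions — leaving exactly ${(d_1 - s) + 2 \choose 2}$ dimensions. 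I would verify the base ``minimal'' cases by the same CoCoA computation referenced for Theorem \ref{theorem}, and push up by the induction already in place.

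The main obstacle I expect is the bookkeeping in the first case (${d+2\choose 2} - 2D > 0$): one needs not merely that $\delta_2 > 0$ — which Lemma \ref{d1>=s-1,2p>3s} already gives — but the \emph{exact} count $2p - 3s$, which requires showing the explicitly-constructed defective forms span \emph{all} of $(I_Z)_d$ and that the relations among them are exactly as expected. This is where the specialization/residuation machinery has to be run not just to detect a nonzero defect but to pin down $\dim(I_Z)_d$ on the nose; in practice this means tracking how $\dim(I_Z)_d$ changes as $\lambda \rightsquigarrow \lambda'$ under Lemma \ref{residuo} and checking the arithmetic $2p - 3s \mapsto 2p' - 3s'$ is compatible with the recursion down to the minimal cases. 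The second case is more robust because the answer ${d_1 - s + 2\choose 2}$ is visibly the dimension of the space of plane curves of degree $d_1 - s$, so one only needs to produce a \emph{forced common divisor} of that degree and argue the quotient map $(I_Z)_d \to (I_{Z'})_{2s}$ is an isomorphism onto everything — a statement the earlier lemmata should already supply.
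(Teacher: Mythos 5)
Your plan follows the paper's route: reduce via Proposition \ref{difetto} to $\delta_2=\delta=\dim(I_Z)_d-\hbox{exp.dim}\,(I_Z)_d$, feed in the exact value of $\dim(I_Z)_d$ in the defective range from Lemma \ref{d1>=s-1,2p>3s}, and split on the sign of ${d+2\choose 2}-2D$. Two corrections, though. First, the identity ${d+2\choose 2}-2D={d_1-s+2\choose 2}+3s-2p$ shows that your two target values, ${d+2\choose 2}-2D+2p-3s$ and ${d_1-s+2\choose 2}$, are the \emph{same} number, so both cases reduce to the single statement $\dim(I_Z)_d={d_1-s+2\choose 2}$ already proved in Lemma \ref{d1>=s-1,2p>3s}(i); equivalently $\delta_2=\min\left\{{d_1-s+2\choose 2},\,2p-3s\right\}$ and the sign of ${d+2\choose 2}-2D$ merely decides which term attains the minimum --- the extra bookkeeping you anticipate in the first case is not needed. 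Second, the mechanism producing the excess forms is the reverse of what you describe: the fixed form is $F_2\cdots F_r\,G_2\cdots G_r$, of degree $2s$, which lies in $I_Z$, and it is multiplied by \emph{arbitrary} forms of degree $d_1-s$ (not a fixed divisor of degree $d_1-s$ times a varying factor of degree $2s$); that is what gives the ${d_1-s+2\choose 2}$ lower bound, with the matching upper bound coming from the residuation of Lemma \ref{residuo}(ii) and induction.
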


\medskip
\begin {proof}
By Proposition \ref{difetto} and
Lemma \ref {d1>=s-1,2p>3s} (ii) we have that 
$$\delta_2 =\min \left \{  {d_1-s+2 \choose 2}; 2p-3s \right\},$$  
and by a direct computation, we get 
$$ {d_1-s+2 \choose 2} >  2p-3s\ \  \Leftrightarrow \ \  {d+2 \choose 2} - 2D>0.$$
\end {proof}
%%%%%%%%%%%
\section{Proof of the main theorem}
%%%%%%%%%%%%

  The next remark describes a specialization we will be using a great deal in what follows.

\medskip

\begin {rem} \label {specialization}

For $\lambda \neq [1,\dots,1] $ and for any $d_e >1$,  ($1 \leq e \leq r$) we construct a specialization $\widetilde Z_e$ of $Z$ that we will use several times in the sequel. If $e$ is clear from the context, we will simply write $\widetilde Z$ instead of  $\widetilde Z_e$.

Recall that the ideals $I_{Y_F}$ and  $I_{Y_G}$ define schemes  made of $D$ distinct points in $\PP^2$, which are  unions of sets of points  defined by $(F_i,F_j)$, and by $(G_i,G_j)$, respectively ($i,j\in \{1,\dots,r\}$, $ i \neq j$).
Consider a curve $\{\widetilde F_e=0\}$ union of a generic line $L= \{ l=0\}$ and a generic curve $\{ F_e'=0\}$ of degree $d_e-1$, so that   $\widetilde F_e= l \cdot F_e'$. Analogously, let $\{\widetilde G_e=0\}$ be union of the same line $L$  and a generic curve $\{ G_e'=0\}$ of degree $d_e-1$, so    $\widetilde G_e= l \cdot G_e'$.  

Let $\widetilde Y_F$ and $\widetilde Y_G$ be the schemes associated to the points $[\widetilde F]$
$= [ F_1\cdots \widetilde F_e\cdots F_r]$ and $[\widetilde G]$ $= [ G_1\cdots \widetilde G_e\cdots G_r]$ of $\Bbb X_{2,\lambda}$, respectively.
Then  $\widetilde Z_e=\widetilde Y_F\cup \widetilde Y_G$ is a specialization of $Z$. 
Now denote by $Y_F'$ and $Y_G'$   the residual schemes of $\widetilde Y_F$ and $\widetilde Y_G$ with respect to $L$, that is,  the schemes defined by the ideals $I_{\widetilde Y_F} :I_L$ and $I_{\widetilde Y_G} :I_L$. Observe that
$$I_{\widetilde Y_F} :I_L= I_{[F_1\cdots F_e'\cdots F_r]} \ \ \ \   { \rm and }\ \ \  \
 I_{\widetilde Y_G} :I_L= I_{[G_1\cdots G_e'\cdots G_r]},$$
 so that each of them consists of 
$D'$ points, where 
$$D' = (d_e-1) s_e +p_e
.$$

Now set  $Z'_e$ ($Z'$ if $e$ is clear from the context) 
$$Z' _e= Y_F' \cup Y_G'.$$
Observe that $[F']$ $=[F_1\cdots F_e'\cdots F_r]$ and $[G']$ $=[G_1\cdots G_e'\cdots G_r]$ are general points in $\XX_{2,\lambda'}$, where $\lambda'_e$ (or simply $\lambda'$),
$$\lambda'_e  = [d_1,\dots, d_e-1,\ldots ,d_r],$$
is a partition of $d'=d-1$, and note that  $Y_F' =  Y_{F'} $  e $Y_G' =  Y_{G'} $.

\end{rem}
%%%%%%%%

 The next two lemmata make use of the specialization we just described in order to ``work our way'' from $Z$ defined by $\lambda$ to a $Z'$ defined by some $\lambda ' \preceq \lambda$.

\begin{lem} 
\label{residuo} 
 Let  $Z'_e$ be as above. Then

 (i) For any $e=1,\dots,r$, if  $1<d_e < s_e$, then
$$
\dim (I_{ Z})_d  \leq \dim (I_{Z'_e})_{d-1}.$$

(ii) Let $e=1$. If $d_1 >1$ and $d_1 \geq s-1$, then
$${\dim } (I_Z)_d \leq \dim (I_{Z'})_{d-1} +d_1-s+1.$$
\begin{proof} 

Let $\widetilde Z$ be the  specialization of  $Z$ described above, so
$\dim (I_{ Z})_d  \leq \dim (I_{\widetilde Z})_d$.

(i)
Since the degree of the scheme $\widetilde Z \cap L$ is $2 \sum _{i \neq e}d_i$ and since $d=d_e+\sum _{i \neq e}d_i<2\sum _{i \neq e}d_i$, by  {\em Bez\'{o}ut}'s Theorem, $L$ is a fixed component for the curves defined by the forms of  $(I_{\widetilde Z})_d$, and the conclusion follows.

(ii) Consider a scheme $ T$ made of $d_1-s+1$ points on the line $L$. Since  the degree of the scheme $(\widetilde Z \cup T) \cap L$ is $2s + d_1-s+1 =  d+1$, then  $L$ is a fixed component for the curves defined by the forms of  $(I_{\widetilde Z\cup T })_d$, and so $\dim (I_{\widetilde Z \cup T} )_d = \dim (I_{Z'})_{d-1} $.
Now
 $$\dim (I_{\widetilde Z \cup T} )_d \geq  {\dim } (I_{\widetilde Z})_d -( d_1-s+1), $$
 hence
 $$\dim  (I_Z)_d \leq \dim (I_{\widetilde Z})_d \leq \dim (I_{\widetilde Z \cup T} )_d + d_1-s+1=  \dim (I_{Z'})_{d-1} +d_1-s+1.
 $$
 
\end{proof}

\end{lem}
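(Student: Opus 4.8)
The plan is to prove both parts by specializing $Z$ to the scheme $\widetilde Z=\widetilde Z_e$ of Remark \ref{specialization} and then stripping off the generic line $L$ via B\'ezout. Since $\widetilde Z$ is a flat degeneration of $Z$ (we only move the generic curves $\{F_e=0\}$, $\{G_e=0\}$ to $\{l F_e'=0\}$, $\{l G_e'=0\}$), the dimension of the degree-$d$ piece of the ideal can only increase, so $\dim(I_Z)_d\le\dim(I_{\widetilde Z})_d$; hence it suffices in both cases to bound $\dim(I_{\widetilde Z})_d$ from above in terms of residual data.

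The crucial input is a degree count on $L$. For $i\ne e$ the complete intersection $(\widetilde F_e,F_i)=(l F_e',F_i)$ contains the $d_i$ reduced points $\{l=F_i=0\}$, and these exhaust $\widetilde Y_F\cap L$, since the remaining complete intersections $(F_i,F_j)$ with $i,j\ne e$ avoid a generic $L$. Summing over $i\ne e$, the scheme $\widetilde Y_F\cap L$ consists of $s_e$ reduced points, and likewise $\widetilde Y_G\cap L$ of $s_e$ reduced points disjoint from the first batch by genericity of $F$ and $G$; thus $\deg(\widetilde Z\cap L)=2s_e$. I also need the residual identity $I_{\widetilde Z}:I_L=I_{Z_e'}$, which follows because the colon distributes over the intersection $I_{\widetilde Z}=I_{\widetilde Y_F}\cap I_{\widetilde Y_G}$ together with the identities $I_{\widetilde Y_F}:I_L=I_{[F_1\cdots F_e'\cdots F_r]}$ and $I_{\widetilde Y_G}:I_L=I_{[G_1\cdots G_e'\cdots G_r]}$ of Remark \ref{specialization}. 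Consequently, whenever B\'ezout forces $L$ to be a fixed component of every degree-$d$ curve through the scheme in question, division by $l$ identifies its degree-$d$ piece with $(I_{Z_e'})_{d-1}$.

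For part (i): since $d=d_e+s_e$ and the hypothesis $d_e<s_e$ gives $d<2s_e=\deg(\widetilde Z\cap L)$, every curve of degree $d$ through $\widetilde Z$ must contain $L$, so $\dim(I_{\widetilde Z})_d=\dim(I_{Z_e'})_{d-1}$, and combining with semicontinuity proves (i). For part (ii), with $e=1$, the count only yields $\deg(\widetilde Z\cap L)=2s\le d=d_1+s$, so I would first adjoin a set $T$ of $d_1-s+1$ generic points of $L$: then $\deg((\widetilde Z\cup T)\cap L)=2s+(d_1-s+1)=d+1>d$, B\'ezout applies to $\widetilde Z\cup T$, and since $T\subset L$ has empty residual with respect to $L$ one gets $\dim(I_{\widetilde Z\cup T})_d=\dim(I_{Z'})_{d-1}$. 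Imposing the $d_1-s+1$ point conditions of $T$ drops the dimension by at most that many, so $\dim(I_{\widetilde Z})_d\le\dim(I_{\widetilde Z\cup T})_d+(d_1-s+1)=\dim(I_{Z'})_{d-1}+d_1-s+1$, which with $\dim(I_Z)_d\le\dim(I_{\widetilde Z})_d$ is the claim.

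The step I expect to require the most care is justifying the two ``clean'' facts above: that $\widetilde Z\cap L$ is exactly $2s_e$ distinct reduced points (relying on genericity of $L$ and of the $F_i,G_i$, together with transversality of the original configuration), and that the residual $I_{\widetilde Z}:I_L$ is literally the scheme $Z_e'$ associated to the partition $\lambda_e'=[d_1,\dots,d_e-1,\dots,d_r]$. Once these are in place, both inequalities follow from B\'ezout plus the elementary bound on how far a linear system can drop when finitely many points are imposed.
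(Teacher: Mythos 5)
Your proposal is correct and follows essentially the same route as the paper's proof: specialize to $\widetilde Z$, count $\deg(\widetilde Z\cap L)=2s_e$ and apply B\'ezout to force $L$ as a fixed component in case (i), and in case (ii) adjoin $d_1-s+1$ points of $L$ to reach degree $d+1$ on $L$ and then bound the dimension drop by the number of added points. The only difference is that you spell out the verification that $\widetilde Z\cap L$ consists of $2s_e$ reduced points and that the residual of $\widetilde Z$ with respect to $L$ is the scheme $Z'_e$, which the paper leaves implicit in Remark \ref{specialization}.
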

%%%%%%%

%%%%%%%%
\begin{lem} \label{alpha}  
Let $r>2$ and, for $r=3$, let $d_3 \geq 2$.

Let $ \alpha = [a_1,\dots,a_r]$ be a partition of $a$, let $a_1\geq \cdots \geq a_r\geq1$, and $a_1 < a_2 + \cdots + a_r$. Consider the variety  $\mathbb{X}_{2,\alpha}$. Let  
$Z_\alpha $ be  union of the two sets of points of $\PP^2$ defined by the ideals $I_ {F_{\alpha}}$ and $I_ {G_{\alpha}}$ (see Section 2) 
of two  general points ${[F_{\alpha}]}$ and ${[G_{\alpha}]}$ in  $\mathbb{X}_{2,\alpha}$. 

 If $d_1 < s$,  $ [d_1,\dots,d_r] \succeq [a_1,\dots,a_r]$, and ${\dim } (I_{Z_\alpha})_a =0$, then   ${\dim } (I_Z)_d=0$.

\begin{proof}

We prove the lemma by induction on $d$. Obvious for $d=a$, so assume $d>a$.

We want to apply  Lemma \ref{residuo} (i), with  $e = \min\{ n\in \Bbb N\  | \ d_n >a_n\}$, hence we have to check that $1<d_e < s_e$. 
Since $d_e > a_e\geq1$, we get $d_e>1$. The other inequality is obvious for 
 $e=1$. For $e >1$  we prove that $d_e < s_e$  by contradiction.
 
If $d_e \geq s_e$ then  since $d_1 < s$  we have
$$d_e \geq s_e= d_1+s-d_e  >d_1+d_1-d_e,$$
and so $d_e >d_1$, a contradiction. 
Hence we may apply  Lemma \ref{residuo} (i), and we get:
$$
\dim (I_{ Z})_d  \leq \dim (I_{Z'})_{d-1},$$
where $Z'$ is the scheme  associated to the partition
 $$\lambda'  = [d_1,\dots, d_e-1,\ldots ,d_r].$$
If we prove that  $\lambda'$ verifies the hypotheses of the lemma, that is 
\begin {itemize} 
\item [a)] every  number in $\lambda' $ is at least 1;
\item [b)] the largest number in $\lambda' $ is less than the sum of the others;
\end {itemize}
then, by the induction hypothesis, we get $\dim (I_{Z'})_{d-1}=0,$ and we are done.

Since $d_e >a_e\geq 1$, then $d_e-1\geq 1$, and a) is verified.
 As for b), 
  we split the proof into two cases.

Case 1: $d_1 > a_1$. So $e=1$, and
$$\lambda'  =[d_1-1,d_2,\dots,d_r].$$

For $d_1 >d_2$, we have that $d_1-1$ is the largest number in $\lambda'$, and obviously
$ d_1-1 <s$. So, by the induction hypothesis, we get  $\dim (I_{Z'})_{d-1}=0$.

Let $d_1 =d_2$. In this case $d_2$ is the  largest number in $\lambda'$ 
and  we need  to show that 
$$ d_2 <d_1-1+d_3+\cdots+d_r=d_2-1+d_3+\cdots+d_r.$$

Since $d_3+\cdots +d_r >1$,  the  inequality above  holds.
 
Case 2:  $d_1=a_1$. Recall that  $e = \min\{ n\in \Bbb N\  | \ d_e >a_e\}$, and 
 $$\lambda'  = [d_1,\dots, d_e-1,\ldots ,d_r].$$
 The largest number in  $\lambda'$ is still $d_1$, so we have to check that 
 
 $$ d_1 <d_2+ \cdots +(d_e-1)+\cdots+d_r. \eqno  (5) $$
 If  $d_1 \geq d_2+ \cdots +(d_e-1)+\cdots+d_r$, then, since  $d_1=a_1$ but $a<d$, 
  we get
 $$a_2+ \cdots +(a_e-1)+\cdots+a_r < d_2+ \cdots +(d_e-1)+\cdots+d_r \leq d_1 
 $$
 $$= a_1  \leq a_2+ \cdots +a_r-1,$$
 a contradiction. Hence (5) holds and, by the induction hypothesis,  we are done.
 \end{proof}

\end{lem}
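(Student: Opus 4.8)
The plan is to argue by induction on $d=|\lambda|$, holding the ``target'' partition $\alpha$ fixed. The base case $d=a$ is forced: since $\lambda\succeq\alpha$ and the two partitions have the same sum, we must have $\lambda=\alpha$ (hence $Z=Z_\alpha$), so $\dim(I_Z)_d=\dim(I_{Z_\alpha})_a=0$. For the inductive step ($d>a$) the idea is to peel off one box from $\lambda$ using the residuation of Lemma~\ref{residuo}(i): this produces a partition $\lambda'$ of $d-1$ that still dominates $\alpha$ and still lies strictly below the hyperplane $\mathbb{H}$, so that the inductive hypothesis applies to $\lambda'$ and yields $\dim(I_{Z'})_{d-1}=0$; then $\dim(I_Z)_d\le\dim(I_{Z'})_{d-1}=0$ finishes the proof.

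The only delicate choice is which box to remove, i.e.\ which index $e$ to feed to Lemma~\ref{residuo}(i). I would take $e=\min\{n : d_n>a_n\}$, which exists because $d>a$. To invoke that lemma I must check $1<d_e<s_e$. The inequality $d_e>1$ is immediate from $d_e>a_e\ge 1$. For $d_e<s_e$: when $e=1$ this is exactly the hypothesis $d_1<s$; when $e>1$, if $d_e\ge s_e$ then, using $s_e=d_1+s-d_e$ and $s>d_1$, one gets $2d_e\ge d_1+s>2d_1$, hence $d_e>d_1$, contradicting that $d_1$ is the largest part. So Lemma~\ref{residuo}(i) gives $\dim(I_Z)_d\le\dim(I_{Z'})_{d-1}$ with $\lambda'=[d_1,\dots,d_e-1,\dots,d_r]\vdash d-1$.

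It then remains to check that $\lambda'$ meets the hypotheses needed to apply the inductive hypothesis to it. That all its parts are $\ge 1$ is clear ($d_e-1\ge a_e\ge 1$), and $\lambda'\succeq\alpha$ holds because we only decreased the $e$-th part, which stays $\ge a_e$. The real content is that the largest part of $\lambda'$ is still strictly smaller than the sum of the rest, and here I would split into two cases. If $d_1>a_1$ then $e=1$ and $\lambda'=[d_1-1,d_2,\dots,d_r]$: when $d_1>d_2$ the largest part is $d_1-1<d_1<s$, and when $d_1=d_2$ the largest part is $d_2=d_1$ and one needs $d_3+\dots+d_r>1$, which holds for $r\ge 4$ automatically and for $r=3$ precisely because of the hypothesis $d_3\ge 2$. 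If instead $d_1=a_1$ then $e>1$, the largest part of $\lambda'$ is still $d_1$, and one needs $d_1<s-1$; should this fail, then from $a_1=d_1$ together with $a<d$ one gets $a_2+\dots+a_r\le(d_2+\dots+d_r)-1=s-1\le d_1=a_1$, contradicting the balanced-ness $a_1<a_2+\dots+a_r$ of $\alpha$. One also notes that $\dim(I_Z)_d$ depends only on the multiset of parts, so the reordering of $\lambda'$ caused by decrementing $d_e$ is harmless.

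I expect the genuine obstacle to be exactly this last verification: making a choice of $e$ that keeps $\lambda'$ both $\succeq\alpha$ and strictly below $\mathbb{H}$. A naive choice (e.g.\ always taking $e=1$) breaks it, and it is in the borderline sub-cases---$d_1=d_2$ with $r=3$, and $d_1=a_1$---that the hypotheses ``$r>2$ and $d_3\ge 2$ when $r=3$'' and the balanced-ness of $\alpha$ get used in an essential way. Everything else (the base case, the inequality $1<d_e<s_e$, and the monotonicity of $\dim(I_Z)_d$ under the specialization, which is already packaged in Lemma~\ref{residuo}) is routine.
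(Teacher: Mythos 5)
Your proposal is correct and follows essentially the same route as the paper's proof: induction on $d$ with $e=\min\{n: d_n>a_n\}$, the same verification that $1<d_e<s_e$ so that Lemma~\ref{residuo}(i) applies, and the same two-case check ($d_1>a_1$ versus $d_1=a_1$) that $\lambda'$ stays strictly below $\mathbb{H}$, including the correct identification of where the hypotheses $r>2$, $d_3\ge 2$ for $r=3$, and the balancedness of $\alpha$ are used. Your added remarks (that $\lambda'\succeq\alpha$ persists and that reordering the parts is harmless) are points the paper leaves implicit.
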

%%%%%%%%%%%%%%%%%%%%%%%%%%%%%%%%%%%%%%%%%%%%%%%%
%%%%%%%%%%%%%%%%%%%%%%%%%%%%%%%%%%%%%
\medskip
Lemmata \ref{d1<s,I=0}, \ref {(a,a,1)(a+1,a,1)}, and \ref {d1<s,I>0} consider the case $d_1 < s$.  We summarize those results in   Proposition \ref {d1<s}.

\medskip
%%%%%%%%
\begin{lem} \label{d1<s,I=0}  
If $d_1<s$, and    we are in one of the following cases:

\begin {itemize}
\item   $[d_1,d_2,d_3] \succeq [4,3,3]$;
\item   $[d_1,d_2,d_3] \succeq [6,6,2]$;
\item   $[d_1,d_2,d_3,d_4] \succeq [4,4,1,1]$;
\item   $[d_1,d_2,d_3,d_4] \succeq [2,2,2,1]$;
\item   $[d_1,d_2,d_3,d_4,d_5] \succeq [2,1,1,1,1]$;
\item   $[d_1,\ldots , d_r] \succeq [1,\ldots ,1]$, with  $r \geq6$.

\end {itemize}

 then $(I_{Z})_d = (0).$
 
\begin{proof}
From [26, Corollary 4.3] (for the $[1,\ldots ,1]$ case) and from direct computations using CoCoA (\cite{RABCP}), for the following partitions we get $(I_{Z})_d = (0)$:

\centerline {
$[4,3,3]$, $[6,6,2]$;
$[4,4,1,1]$, $[2,2,2,1]$;
$[2,1,1,1,1]$, and, for
 $r \geq6$, $[1,1,\dots,1]$.}
 
\noindent Now the conclusion  follows immediately from Lemma \ref {alpha} .
\end{proof}
\end{lem}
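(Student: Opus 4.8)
The goal is to prove Lemma~\ref{d1<s,I=0}: under the stated hypothesis $d_1 < s$ and membership in one of the listed dominance classes, the degree-$d$ part of $I_Z$ vanishes. The plan is to reduce everything to finitely many base cases via the dominance machinery of Lemma~\ref{alpha}, and to settle those base cases directly.

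\textbf{Strategy.} The key observation is that Lemma~\ref{alpha} is tailor-made for exactly this reduction: if $\alpha = [a_1,\dots,a_r]$ is ``below the hyperplane'' ($a_1 < a_2+\cdots+a_r$), if $[d_1,\dots,d_r] \succeq [a_1,\dots,a_r]$, if $d_1 < s$, and if $(I_{Z_\alpha})_a = (0)$ for the scheme associated to $\alpha$, then $(I_Z)_d = (0)$. So the proof of this lemma reduces to a single task: verify that each of the six ``minimal'' partitions
$$[4,3,3],\ [6,6,2],\ [4,4,1,1],\ [2,2,2,1],\ [2,1,1,1,1],\ [1,\dots,1]\ (r\ge 6)$$
has $(I_{Z_\alpha})_a = (0)$. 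First I would check that each of these minimal partitions does indeed satisfy the ``below $\mathbb{H}$'' condition $a_1 < a_2+\cdots+a_r$ — e.g. $4 < 3+3$, $6 < 6+2$, $4 < 4+1+1$, $2 < 2+2+1$, $2 < 1+1+1+1$, and for $[1,\dots,1]$ with $r\ge 6$ one has $1 < r-1$ — and that every partition in the listed dominance classes dominates the corresponding minimal one and satisfies $d_1 < s$ (the latter being a hypothesis of the lemma, but one should note the compatibility: dominating a below-$\mathbb{H}$ partition while keeping $d_1 < s$ is consistent). Also worth a remark: one should check that when $r = 3$ and the minimal partition has a part equal to $1$, the hypothesis of Lemma~\ref{alpha} (``for $r = 3$, $d_3 \geq 2$'') is met — but the only $r=3$ minimal cases here are $[4,3,3]$ and $[6,6,2]$, both with $d_3 \geq 2$, so this is fine.

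\textbf{The base cases.} For the split case $[1,\dots,1]$ with $r \geq 6$, this is precisely the content of \cite[Corollary~4.3]{S:1} (cited as ``[26, Corollary 4.3]'' in the excerpt): the secant line variety of the variety of split plane curves on $\geq 6$ lines has $(I_Z)_d = (0)$, i.e. $Z$ imposes independent conditions / forces the whole degree-$d$ space. For the five remaining minimal partitions — $[4,3,3]$, $[6,6,2]$, $[4,4,1,1]$, $[2,2,2,1]$, $[2,1,1,1,1]$ — these are individual finite computations: one constructs the scheme $Z = Y_F \cup Y_G$ explicitly (choosing random $F_i$, $G_i$ of the right degrees over a suitable field, forming the ideals $I_{Y_F} = \bigcap_{i<j}(F_i,F_j)$ and similarly for $G$, intersecting, and computing the Hilbert function in degree $d$) and verifies in CoCoA \cite{RABCP} that $\dim(I_Z)_d = 0$. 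Because $Z$ depends on the choice of general points, one should note that the computed dimension for a random choice is an upper bound for the general value by semicontinuity, and since dimension is $\geq 0$ trivially, getting $0$ for one explicit choice proves it is $0$ generically.

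\textbf{Main obstacle.} The conceptual content is entirely in Lemma~\ref{alpha}, which is assumed; so the ``hard part'' here is really bookkeeping: making sure the six minimal partitions are genuinely minimal for the six dominance classes, that each satisfies the below-$\mathbb{H}$ hypothesis so that Lemma~\ref{alpha} applies, and that the CoCoA computations are set up correctly (in particular that the ideal $I_{Y_F}$ is computed as the intersection $\bigcap_{1\le i<j\le r}(F_i,F_j)$ rather than as $(F/F_1,\dots,F/F_r)$ — these agree by \cite{PS:1} as recalled in Section~2, but the intersection form is the safer one to implement). Once those checks are in place, the conclusion is immediate: combine the base-case computations (and \cite{S:1} for the split case) with Lemma~\ref{alpha} applied to each dominance class. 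There is no genuine induction to run by hand — Lemma~\ref{alpha} already packages the induction on $d$ internally.
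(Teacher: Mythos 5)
Your proposal is correct and follows essentially the same route as the paper: verify the six minimal partitions directly (CoCoA for the five non-split ones, the cited Corollary 4.3 of \cite{S:1} for $[1,\dots,1]$ with $r\geq 6$) and then invoke Lemma~\ref{alpha} to propagate the vanishing to every partition dominating them. Your extra checks (that the minimal partitions lie below $\mathbb{H}$, that the $r=3$ hypothesis $d_3\geq 2$ of Lemma~\ref{alpha} is satisfied, and the semicontinuity remark justifying a single random CoCoA computation) are sensible bookkeeping that the paper leaves implicit.
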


%%%%%%%%%%%%%

%%%%%%%%
\begin{lem} \label{(a,a,1)(a+1,a,1)}  
If $r=3$, $a \geq 1$, and $[d_1,d_2,d_3] \in \{[a,a,1],[a+1,a,1]  \}$, 
 then $ (I_{Z})_d$ has the expected positive dimension.

\begin{proof}
By induction on $d$. For $d=3$, that is, for $[d_1,d_2,d_3] =[1,1,1]$ see  Corollary 4.3 in \cite{S:1} or Theorem 4.1 in \cite{A}.
Let $d>3$. 

By an easy computation we get
$$ {\rm exp.dim}   (I_{Z})_d = 
\left\{  \begin {matrix} 
a+3 & \hbox{for } [d_1,d_2,d_3]=[a,a,1], \hfill   \\
a+4 & \hbox{for }  [d_1,d_2,d_3]=[a+1,a,1]  .
\end{matrix}
\right.
$$
By Lemma \ref{residuo} (ii) we have that
$${\dim } (I_Z)_d \leq \dim (I_{Z'})_{d-1} +d_1-s+1,$$
where $Z'$ is associated to the partition
$$ \lambda' = 
\left\{  \begin {matrix} 
[a,a-1,1] & \hbox{for } [d_1,d_2,d_3]=[a,a,1], \hfill   \\
[a,a,1] \hfill & \hbox{for }  [d_1,d_2,d_3]=[a+1,a,1]  ,
\end{matrix}
\right.
$$
and, in both cases, by the induction hypothesis, we get
$$ \dim (I_{Z'})_{d-1} = a+3
.$$

Note that,  in case $[d_1,d_2,d_3]=[a,a,1]$,   $d>3$ implies $a \geq 2$. Hence $a-1 \geq 1$. 
Thus, for $[d_1,d_2,d_3]=[a,a,1]$ we have
$$ a+3 = {\rm exp.dim}   (I_{Z})_d \leq {\dim } (I_Z)_d \leq a+3 +d_1-s+1=a+3;$$
for $[d_1,d_2,d_3]=[a+1,a,1]$ we have
$$ a+4 = {\rm exp.dim}   (I_{Z})_d \leq {\dim } (I_Z)_d \leq a+3 +d_1-s+1=a+4;$$
and we are done.
\end{proof}
\end{lem}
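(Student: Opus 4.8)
The plan is to prove Lemma \ref{(a,a,1)(a+1,a,1)} by induction on $d$, exactly mirroring the inductive strategy already laid out in the sketch of the main theorem: use the specialization $\widetilde Z$ of Remark \ref{specialization} and the residuation bound of Lemma \ref{residuo} (ii) to pass from a partition to a smaller one, compute the two base cases by hand (or cite the known results for $[1,1,1]$), and then squeeze the dimension of $(I_Z)_d$ between its expected value from below and the residuation bound from above.

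First I would record the base of the induction. For $d=3$ the only relevant partition is $[1,1,1]$, and the fact that $(I_Z)_3$ has the expected dimension is Corollary 4.3 of \cite{S:1} (equivalently Theorem 4.1 of \cite{A}); a short parameter count gives $\mathrm{exp.dim}\,(I_Z)_3 = 4 = a+3$ with $a=1$. Next, for $d>3$, I would compute $\mathrm{exp.dim}\,(I_Z)_d$ directly: since $Z$ is a union of two schemes associated to $\lambda$, one has $\mathrm{exp.dim}\,(I_Z)_d = \max\{\binom{d+2}{2}-2D,\,0\}$, and plugging in $\lambda=[a,a,1]$ (so $d=2a+1$, $D=a^2+2a$) gives $\binom{2a+3}{2}-2(a^2+2a)=a+3>0$, while $\lambda=[a+1,a,1]$ (so $d=2a+2$, $D=a^2+3a+1$) gives $\binom{2a+4}{2}-2(a^2+3a+1)=a+4>0$. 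In both cases $d_1\geq s-1$ holds ($d_1-s+1$ equals $0$ and $1$ respectively), so Lemma \ref{residuo} (ii) applies and yields $\dim(I_Z)_d \leq \dim(I_{Z'})_{d-1} + (d_1-s+1)$, where $Z'$ is associated to $\lambda'=[a,a-1,1]$ in the first case and $\lambda'=[a,a,1]$ in the second.

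The key point is that in each case $\lambda'$ is again of the form covered by the lemma: $[a,a,1]$ is of the first type, and $[a,a-1,1]$ is of the second type with parameter $a-1$ (observing that $d>3$ forces $a\geq 2$ in the $[a,a,1]$ case, so $a-1\geq 1$ and the partition is legitimate). Hence the induction hypothesis gives $\dim(I_{Z'})_{d-1} = a+3$ in both situations (for $\lambda'=[a,a-1,1]$ the expected dimension with parameter $a-1$ is $(a-1)+4 = a+3$, and for $\lambda'=[a,a,1]$ it is $a+3$ directly). Combining, $\dim(I_Z)_d \leq (a+3) + (d_1-s+1)$, which equals $a+3$ when $\lambda=[a,a,1]$ and $a+4$ when $\lambda=[a+1,a,1]$. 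Since trivially $\dim(I_Z)_d \geq \mathrm{exp.dim}\,(I_Z)_d$, the two-sided estimate forces equality, and the positivity of the expected dimension was checked above.

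The only real subtlety—hence the step I would treat most carefully—is the bookkeeping ensuring $\lambda'$ stays inside the family and that the residuation lemma is legitimately applicable: one must check $d_1>1$ and $d_1\geq s-1$ for Lemma \ref{residuo} (ii) (immediate for these $\lambda$ since $d_1\in\{a,a+1\}$ and $s\in\{a+1,a+1\}$, so $d_1-s+1\in\{0,1\}\geq 0$ and $d_1\geq 2$ because $d>3$), and that $d-1>3$ or else we have landed in the base case. Everything else is the arithmetic of binomial coefficients already indicated in the statement, which I would not grind through beyond what is shown.
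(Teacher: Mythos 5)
Your proposal is correct and follows essentially the same route as the paper: induction on $d$ with base case $[1,1,1]$ from \cite{S:1} or \cite{A}, the upper bound from Lemma \ref{residuo} (ii) passing to $\lambda'=[a,a-1,1]$ or $[a,a,1]$, and the squeeze against the expected dimension, including the check that $d>3$ forces $a\geq 2$ so $[a,a-1,1]$ is a legitimate partition. The arithmetic ($a+3$ and $a+4$, and $d_1-s+1\in\{0,1\}$) matches the paper exactly.
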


%%%%%%%%

%%%%%%%%
\begin{lem} \label{d1<s,I>0}  Let $d_1 <s$.
If we are in one of the following cases:

\begin {itemize}
\item   $r =3$, $[d_1,d_2,d_3]\in \{ [a,a,1]$, $ [2,2,2]$, $[3,2,2]$, $[3,3,2]$,  $[4,3,2]$, $[4,4,2],$  $[5,4,2],$
 $[5,5,2], $ $ [6,5,2],[3,3,3] \}$;
\item   $r =4$, $[d_1, d_2, d_3, d_4]\in \{ [1,1,1,1], [2,1,1,1],   [2,2,1,1],  [3,2,1,1]$, $ [3,3,1,1]$, $[4,3,1,1] \}$;
\item   $r =5$, $[d_1,\dots,d_5]=[1,1,1,1,1]$;

\end {itemize}

 then $ (I_{Z})_d$ has the expected positive dimension.

\begin{proof}

For  $d_1=1$, that is if $\lambda= [1,\dots,1]$ see  Corollary 4.3 in \cite{S:1} or Theorem 4.1 in \cite{A}, so let $d_1 >1$.

For $ [a,a,1]$ see Lemma \ref {(a,a,1)(a+1,a,1)}. 

Direct computations using CoCoA (see \cite{RABCP}) (or ad hoc specializations) cover the cases:
$[2,2,2]$, $[3,3,2]$, $[4,4,2]$, $[5,5,2]$, $[3,3,3]$, $[2,2,1,1]$, and $[3,3,1,1]$.

All the other cases follow from the previous ones by applying Lemma \ref {residuo} (ii).
For instance, consider $ [3,2,1,1]$. By  Lemma \ref {residuo} (ii) we have
$${\dim } (I_Z)_d \leq \dim (I_{Z'})_{d-1} +d_1-s+1,$$
and $Z'$ is associated to $ [2,2,1,1]$, so 
$$\dim (I_{Z'})_{d-1}= 28- 26 =2.$$
Since  ${\rm exp.dim } (I_Z)_d = 36-34=2$, we have
$$ 2 \leq {\dim } (I_Z)_d \leq \dim (I_{Z'})_{d-1} +d_1-s+1 = 2+3-4+1=2,$$
and the conclusion follows.

For the other cases the following table is useful. Note that for these cases $d_1=s-1$.
$$
\begin{array}{|c|c|c|c|c|ccccccccc}
\hline
 {\rm case }      &   {\rm exp.dim}  (I_Z)_d   &  \lambda'     & \dim  (I_{Z'})_{d-1}    \\ % &  \dim (I_{Z'})_{d-1} +d_1-s+1   \\
\hline
{[3,2,2]}              &   4                                    &   [2,2,2]       &    4          \\%                   &     4                       \\
{[4,3,2]}              &  3                                    &  [3,3,2]      &    3           \\%               &       3                       \\
{[5,4,2]}            &   2                                   &    [4,4,2]    &    2                \\%          &       2                       \\
{[6,5,2]}              &  1                                    &   [5,5,2]     &    1         \\%                 &       1                       \\
{[2,1,1,1]}           &  3                                    &   [1,1,1,1]  &    3            \\%              &       3                       \\
{[3,2,1,1]}           &   2                                   &   [2,2,1,1]  &    2           \\%               &       2                       \\
{[4,3,1,1]}           &  1                                    &  [3,3,1,1]   &    1         \\%                 &       1         \\
\hline              
\end{array}
$$

%The original form is below:

%$\begin{matrix}
% {\rm case }      &   {\rm exp.dim } \  (I_Z)_d   &  \lambda'     & \dim  (I_{Z'})_{d-1}     &  \dim (I_{Z'})_{d-1} +d_1-s+1   \\
%[3,2,2]              &  \  4                                   &  \  [2,2,2]     & \   4                         &     \  4                       \\
%[4,3,2]              &  \  3                                   &  \  [3,3,2]     & \   3                         &     \  3                       \\
%[5,4,2]              &  \  2                                   &  \  [4,4,2]     & \   2                         &     \  2                       \\
%[6,5,2]              &  \  1                                   &  \  [5,5,2]     & \   1                         &     \  1                       \\
%[2,1,1,1]              &  \  3                                &  \  [1,1,1,1]     & \   3                      &     \  3                       \\
%[3,2,1,1]              &  \  2                                &  \  [2,2,1,1]     & \   2                      &     \  2                       \\
%[4,3,1,1]              &  \  1                                &  \  [3,3,1,1]     & \   1                      &     \  1                       \\
%
%\end{matrix}
%$
\end{proof}
\end{lem}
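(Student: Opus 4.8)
The plan is to dispose of this finite list in three batches. Throughout I would use that, for \emph{any} $Z = Y_F\cup Y_G$ (in particular the general one), $\dim(I_Z)_d \ge {d+2\choose 2}-2D = {\rm exp.dim}\,(I_Z)_d$, since $Z$ has degree at most $2D$ and so imposes at most $2D$ conditions in degree $d$; hence in each case it suffices to prove the matching upper bound, and by upper semicontinuity of $\dim(I_Z)_d$ that upper bound may be verified on any convenient specialization.

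First I would clear the split partitions $\lambda=[1,\dots,1]$ with $r=4,5$ by quoting \cite{S:1} (or \cite{A}), where $\dim\sigma_2$ of the variety of split plane curves is computed, and checking by hand that the value agrees with ${d+2\choose 2}-2D$ for $d=r=4,5$; the family $[a,a,1]$ is precisely Lemma~\ref{(a,a,1)(a+1,a,1)}. Next I would treat the seven ``seed'' partitions $[2,2,2]$, $[3,3,2]$, $[4,4,2]$, $[5,5,2]$, $[3,3,3]$, $[2,2,1,1]$, $[3,3,1,1]$ by exhibiting, for each, a general (or, to shorten things, a configuration specialized along the generic line $L$ of Remark~\ref{specialization}) scheme $Z$ and computing $\dim(I_Z)_d$ with CoCoA~\cite{RABCP}, confirming it equals ${d+2\choose 2}-2D$.

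The only step with real content is the third batch: $[3,2,2]$, $[4,3,2]$, $[5,4,2]$, $[6,5,2]$, $[2,1,1,1]$, $[3,2,1,1]$, $[4,3,1,1]$. Each satisfies $d_1=s-1$, so $d_1>1$ and $d_1\ge s-1$, and Lemma~\ref{residuo}(ii) applies with $e=1$; since $d_1-s+1=0$ it gives $\dim(I_Z)_d \le \dim(I_{Z'})_{d-1}$ with $\lambda'=[d_1-1,d_2,\dots,d_r]$. In each of the seven cases $\lambda'$ is one of the partitions already handled — $[3,2,2]\to[2,2,2]$, $[4,3,2]\to[3,3,2]$, $[5,4,2]\to[4,4,2]$, $[6,5,2]\to[5,5,2]$, $[2,1,1,1]\to[1,1,1,1]$, $[3,2,1,1]\to[2,2,1,1]$, $[4,3,1,1]\to[3,3,1,1]$ — so $\dim(I_{Z'})_{d-1}$ is known; and a one-line count (since $d_1=s-1$ forces $d=2s-1$, one has $D-D'=s$ and ${d+2\choose 2}-{d+1\choose 2}=2s$, so the two expected dimensions coincide) shows it equals ${d+2\choose 2}-2D={\rm exp.dim}\,(I_Z)_d$. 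Together with the universal lower bound, the two inequalities pinch $\dim(I_Z)_d$ to the expected positive value.

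The hard part will be the middle batch: either finding specializations clean enough that the seven seed dimensions can be certified by hand, or relying on the CoCoA computations. I would also flag that the \emph{choice} of this list is not innocent — its role in Proposition~\ref{d1<s} (via Lemma~\ref{alpha}) is that every partition with $d_1<s$ either dominates a ``$(I_Z)_d=(0)$'' seed of Lemma~\ref{d1<s,I=0} or descends, under repeated use of Lemma~\ref{residuo}, to a partition on the present list, so the list must be exactly large enough to catch every such terminating descent.
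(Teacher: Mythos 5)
Your proposal is correct and follows essentially the same route as the paper: the split cases and $[a,a,1]$ are quoted from the references and Lemma~\ref{(a,a,1)(a+1,a,1)}, the seven seed partitions are certified by CoCoA, and the remaining seven cases with $d_1=s-1$ are pinched between the universal lower bound ${d+2\choose 2}-2D$ and the upper bound from Lemma~\ref{residuo}(ii), exactly as in the paper's table. Your explicit one-line verification that the expected dimensions of $\lambda$ and $\lambda'$ coincide when $d_1=s-1$ is a welcome addition that the paper leaves implicit.
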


%%%%%%%%
\medskip
 The following proposition gives a complete picture of what happens ``below $\mathbb{H}$''.

\begin{prop} \label {d1<s}  Let  $d_1 <s$. Then 

(i) $ (I_{Z})_d $ has the expected dimension;

(ii) $\dim (I_{Z})_d = (0)$
if and only if we are in one of the following cases:

\begin {itemize}
\item   $[d_1,d_2,d_3] \succeq [4,3,3]$;
\item   $[d_1,d_2,d_3] \succeq [6,6,2]$;
\item   $[d_1,d_2,d_3,d_4] \succeq [4,4,1,1]$;
\item   $[d_1,d_2,d_3,d_4] \succeq [2,2,2,1]$;
\item   $[d_1,d_2,d_3,d_4,d_5] \succeq [2,1,1,1,1]$;
\item   $[d_1,\ldots , d_r] \succeq [1,\ldots ,1]$, with  $r \geq6$.

\end {itemize}
\begin{proof}
This follows from Lemmata \ref {d1<s,I=0} and \ref {d1<s,I>0}, by noticing that the cases listed in Lemma \ref {d1<s,I>0} are the ones  left out by Lemma \ref {d1<s,I=0}.
\end{proof}
\end{prop}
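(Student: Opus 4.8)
The plan is to deduce the proposition from Lemmata \ref{d1<s,I=0} and \ref{d1<s,I>0} by verifying two facts: every partition $\lambda=[d_1,\dots,d_r]$ with $d_1<s$ satisfies the hypotheses of one of these two lemmata, and the two lists of cases they treat are disjoint.

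First I would record how parts (i) and (ii) follow once this dichotomy is established. By the convention $\dim(I_Z)_d\geq \mathrm{exp.dim}(I_Z)_d\geq 0$, whenever Lemma \ref{d1<s,I=0} applies and gives $(I_Z)_d=(0)$ the expected dimension is forced to be $0$ as well, so $(I_Z)_d$ has the expected dimension; and in the cases of Lemma \ref{d1<s,I>0} that lemma asserts directly that $(I_Z)_d$ has its expected (positive) dimension. Hence (i) holds for every $\lambda$ with $d_1<s$. For (ii), the ``if'' direction is exactly Lemma \ref{d1<s,I=0}; conversely, if $\dim(I_Z)_d=0$ then $\lambda$ cannot be one of the cases of Lemma \ref{d1<s,I>0}, since there $(I_Z)_d$ is positive--dimensional, so by the dichotomy $\lambda$ must occur in the list of Lemma \ref{d1<s,I=0}.

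The substance of the argument is therefore a combinatorial classification of the partitions with $d_1<s$, which I would organise according to $r$ and the tail $(d_3,d_4,\dots)$ of $\lambda$. For $r=2$ the inequality $d_1<s=d_2$ is incompatible with $d_1\geq d_2$, so nothing is needed. For $r\geq 6$ one trivially has $\lambda\succeq[1,\dots,1]$; for $r=5$ either $\lambda=[1,1,1,1,1]$ or $d_1\geq 2$ and then $\lambda\succeq[2,1,1,1,1]$. For $r=4$, if $d_3\geq 2$ then $\lambda\succeq[2,2,2,1]$, while if $d_3=d_4=1$ the condition $d_1<s$ forces $d_1\in\{d_2,d_2+1\}$, and a direct check shows $\lambda\succeq[4,4,1,1]$ unless $\lambda$ is one of $[1,1,1,1]$, $[2,1,1,1]$, $[2,2,1,1]$, $[3,2,1,1]$, $[3,3,1,1]$, $[4,3,1,1]$. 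For $r=3$, if $d_3=1$ then $d_1<s$ forces $\lambda=[a,a,1]$ for some $a\geq 1$; if $d_3=2$ then again $d_1\in\{d_2,d_2+1\}$ and $\lambda\succeq[6,6,2]$ unless $\lambda$ is one of $[2,2,2]$, $[3,2,2]$, $[3,3,2]$, $[4,3,2]$, $[4,4,2]$, $[5,4,2]$, $[5,5,2]$, $[6,5,2]$; if $d_3\geq 3$ then $\lambda\succeq[4,3,3]$ unless $\lambda=[3,3,3]$. Comparing with the statements of Lemmata \ref{d1<s,I=0} and \ref{d1<s,I>0}, the exceptional partitions above are precisely the members of the list in Lemma \ref{d1<s,I>0}; and disjointness of the two lists is immediate, since $\succeq$ compares only partitions with the same number of parts and none of these exceptional partitions is dominated by a seed of its own length.

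I expect the only genuine difficulty to be the bookkeeping: making sure the case division by $(r,d_3,d_4)$ is exhaustive, and that the ``boundary'' partitions sitting just under a seed --- such as $[3,3,3]$, $[6,5,2]$ and $[4,3,1,1]$ --- are exactly the ones recorded in Lemma \ref{d1<s,I>0}, so that no $\lambda$ with $d_1<s$ is left uncovered and the equivalence in (ii) is sharp. It is also worth checking, case by case, that the strict inequality $d_1<s$ really does hold for each exceptional partition (for instance $[6,5,2]$ has $s=7>6=d_1$).
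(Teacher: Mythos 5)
Your proposal is correct and follows exactly the paper's route: the paper's proof is the one--line observation that Lemmata \ref{d1<s,I=0} and \ref{d1<s,I>0} partition the set of partitions with $d_1<s$, and you have simply carried out that verification explicitly (your case division by $r$ and by $d_3$, and the resulting exceptional lists, match the paper's lemma statements precisely). No gap; you have just written out the bookkeeping the paper leaves to the reader.
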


%%%%%%%%%%%%%%%%%%%%%%%%%%%%%%%%%%%%%%%%%
%%%%%%%%%%%%%%%%%%%%%%%%%%%%%%%%%%%%%%%%%%

Lemma \ref {r=2} below, deals with the special case in which $r=2$.  This was not dealt with in the preceding lemmata since when $r=2$ we never have $d_1< s \ (=d_2)$.  It's not convenient to even consider this case in the successive lemmata since in order to study the case $d_1\geq s$ we actually have to begin with the case in which $d_1 = s-1$.

%%%%%%%%%%%%%%%%%%%%%%%%%%%%%%%%%%%%%%%%
\begin{lem} \label {r=2}  For $r=2$, $ (I_{Z})_d $ has the expected positive dimension.
\begin {proof}

In this case the dimension of $(I_{Z})_d$ is always positive, in fact
$${\rm exp.dim } (I_{Z})_d = {d_1+d_2+2 \choose 2} - 2d_1d_2= 
\frac{ (d_1-d_2)^2 +3(d_1+d_2)+2 }{2}.
$$
For $d=2$, that is, for $[d_1,d_2]= [1,1]$, we trivially have that the dimension of $(I_{Z})_d$ is as expected.
For $d >2$, by induction on $d$, 
 by Lemma \ref {residuo} (ii), and easy computations we get:
$${\dim } (I_Z)_d \leq \dim (I_{Z'})_{d-1} +d_1-s+1= {\rm exp.dim } (I_{Z})_d.$$
It follows that for $r=2$, $ (I_{Z})_d $ has the expected dimension.
\end{proof}
\end{lem}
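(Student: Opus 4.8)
The plan is an induction on $d$, descending via Lemma~\ref{residuo}(ii). Since for $r = 2$ we have $D = d_1 d_2$ and $s = s_1 = d_2$, a direct computation gives
$$
\binom{d_1+d_2+2}{2} - 2d_1d_2 = \frac{(d_1-d_2)^2 + 3(d_1+d_2) + 2}{2} \geq 4,
$$
so the expected dimension $\mathrm{exp.dim}\,(I_Z)_d = \max\{\binom{d+2}{2} - 2D,\,0\}$ equals $\binom{d+2}{2} - 2D$ and is strictly positive; this already yields the ``positive'' part of the assertion. Moreover, since $Z$ is a zero-dimensional scheme of degree $2D$, its Hilbert function satisfies $H(R/I_Z,d) \leq 2D$, whence $\dim(I_Z)_d \geq \binom{d+2}{2} - 2D = \mathrm{exp.dim}\,(I_Z)_d$ automatically; it therefore remains only to establish the reverse inequality.

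For the base case $d = 2$, the only two-part partition is $[1,1]$, and then $Y_F$ and $Y_G$ are each a single general point of $\mathbb{P}^2$, so $\dim(I_Z)_2 = 6 - 2 = 4 = \mathrm{exp.dim}\,(I_Z)_2$. For $d > 2$ we have $d_1 \geq 2$ (otherwise $d_1 = d_2 = 1$ and $d = 2$), the inequality $d_1 \geq d_2 - 1 = s - 1$ is automatic, and $\lambda \neq [1,\dots,1]$; hence the hypotheses of Lemma~\ref{residuo}(ii) hold with $e = 1$, giving
$$
\dim(I_Z)_d \leq \dim(I_{Z'})_{d-1} + d_1 - s + 1,
$$
where $Z'$ is associated to the two-part partition $\lambda' = [d_1 - 1, d_2]$ of $d - 1$ (all parts $\geq 1$, possibly after reordering). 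By the inductive hypothesis $\dim(I_{Z'})_{d-1} = \binom{d_1+d_2+1}{2} - 2(d_1-1)d_2$, and one checks directly that
$$
\binom{d_1+d_2+1}{2} - 2(d_1-1)d_2 + (d_1 - d_2 + 1) = \frac{(d_1-d_2)^2 + 3(d_1+d_2)+2}{2} = \mathrm{exp.dim}\,(I_Z)_d,
$$
so that $\dim(I_Z)_d \leq \mathrm{exp.dim}\,(I_Z)_d$. Combined with the automatic lower bound, this gives equality and closes the induction.

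The argument involves no genuine geometric obstacle: the only points requiring attention are checking, in the inductive step, that the hypotheses of Lemma~\ref{residuo}(ii) are satisfied (both $d_1 \geq 2$ and $\lambda \neq [1,\dots,1]$ being forced by $d > 2$), that $\lambda' = [d_1 - 1, d_2]$ remains a legitimate two-part partition with positive parts so that the induction applies to it, and that the bound produced by Lemma~\ref{residuo}(ii) matches the expected dimension exactly — which is the short computation displayed above.
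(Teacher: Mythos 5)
Your proof is correct and follows essentially the same route as the paper's: induction on $d$ with base case $[1,1]$, descending via Lemma~\ref{residuo}(ii) to the partition $[d_1-1,d_2]$, and checking that the resulting upper bound equals the expected dimension. You simply make explicit a few steps the paper compresses into ``easy computations'' (the automatic lower bound $\dim(I_Z)_d \geq \binom{d+2}{2}-2D$, the verification that the hypotheses of Lemma~\ref{residuo}(ii) hold, and the closed-form identity), all of which check out.
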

 
 %%%%%%%%%%%%%%%%%%%%%%%%%%%%%%%I
 %%%%%%%%%%%%%%%%%%%%%%%%%%%%%%%%%%%%%%%%%%
\medskip
  The Lemmata \ref {d1>=s-1,2p<=3s}, \ref {d1>=s-1,2p>3s} deal with the case $d_1 \geq s$  and complete the proof of Theorem \ref{theorem}.

\medskip
%%%%%%%%%%%%%%%%%%%%%%%%%%%%%%%%%%%%%%%%
 \begin{lem} \label{d1>=s-1,2p<=3s}  
Let $r>2$. If $d_1\geq s-1$, and $2p -3s\leq  0$, then  $(I_{Z})_d$ has the expected dimension.

\begin{proof} By induction on $d_1$.
If $d_1 = s-1$, the conclusion follows from Proposition \ref{d1<s}. 

Let $d_1 \geq s$.
Observe that if $d_1=d_2$, we have $s-d_1 \geq d_2+d_3 -d_1  >0 $, a contradiction. So  $d_1>d_2$.

 The expected dimension of $(I_Z)_d$ is

$$ {\rm exp.dim } (I_Z)_d = {d+2 \choose 2} -2d_1s -2p = {d_1-s+2 \choose 2} +3s -2p .
$$

Now let $e=1$ and consider  the scheme  $Z'$ associated to the partition $[d_1-1,d_2,\dots,d_r].$
Since $d_1-1 \geq s-1$, by the induction hypothesis and by Lemma \ref {residuo} (ii) we have 
$${\dim } (I_Z)_d \leq \dim (I_{Z'})_{d-1} +d_1-s+1
={d+1 \choose 2} -2(d_1-1) s -2p +d_1-s+1$$
$$=  {d_1-s+2 \choose 2} +3s -2p ={\rm exp.dim } (I_Z)_d ,$$
and the conclusion follows.
\end{proof}
\end{lem}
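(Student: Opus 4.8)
The plan is to argue by induction on $d_1$, using Lemma \ref{residuo}(ii) to descend the first part of the partition and Proposition \ref{d1<s} as the base case. The base case is $d_1 = s-1$: then $d_1 < s$, so $\lambda$ lies ``below $\mathbb{H}$'' and Proposition \ref{d1<s}(i) already gives that $(I_Z)_d$ has the expected dimension. So from here on I assume $d_1 \geq s$.

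Before descending I would dispose of the easy consequences of $d_1 \geq s$. Since $r>2$ we have $s = d_2+\cdots+d_r \geq 2$, so $d_1 \geq 2 > 1$; and $d_1 = d_2$ would force $s = d_2+\cdots+d_r \geq d_1 + d_3 > d_1$, so in fact $d_1 > d_2$. I would also put the expected dimension into the shape the induction wants: using $D = d_1 s + p$ together with the elementary identity ${d_1+s+2 \choose 2} - {d_1-s+2 \choose 2} = 2 d_1 s + 3 s$ one gets
$$
{\rm exp.dim}\,(I_Z)_d = {d+2 \choose 2} - 2D = {d_1 - s + 2 \choose 2} + 3s - 2p .
$$
Since $d_1 \geq s$ the first term is $\geq 1$ and $3s - 2p \geq 0$ by hypothesis, so this quantity is positive; hence the ``$\max$ with $0$'' in the definition of ${\rm exp.dim}$ is irrelevant, and because $\dim(I_Z)_d \geq {d+2 \choose 2} - 2D$ always holds (imposing $2D$ linear conditions on $R_d$ lowers its dimension by at most $2D$), it suffices to prove the reverse inequality $\dim(I_Z)_d \leq {d_1 - s + 2 \choose 2} + 3s - 2p$.

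For that I would apply Lemma \ref{residuo}(ii) with $e=1$ — legitimate since $d_1 > 1$ and $d_1 \geq s - 1$ — to obtain $\dim(I_Z)_d \leq \dim(I_{Z'})_{d-1} + (d_1 - s + 1)$, where $Z'$ is associated to $\lambda' = [d_1 - 1, d_2, \ldots, d_r] \vdash d-1$. The key point is that $\lambda'$ again satisfies the hypotheses: $d_1 > d_2$ keeps the parts non-increasing with top part $d_1 - 1 \geq s - 1 \geq 1$; removing one box from the largest part changes neither $s$ nor $p$, so $2p - 3s \leq 0$ persists and the relevant inequality becomes $(d_1 - 1) \geq s - 1$, i.e.\ $d_1 \geq s$, which holds; and $r$ is unchanged. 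Its first part $d_1 - 1$ is strictly smaller than $d_1$, so the induction hypothesis gives $\dim(I_{Z'})_{d-1} = {d_1 - s + 1 \choose 2} + 3s - 2p$. Substituting and using Pascal's rule ${d_1 - s + 1 \choose 2} + (d_1 - s + 1) = {d_1 - s + 2 \choose 2}$ yields exactly the required bound, completing the induction.

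The only step I expect to need genuine care — the main obstacle — is the bookkeeping of the descent: checking that $\lambda'$ meets all of the hypotheses ($d_1' \geq s' - 1$, $2p' - 3s' \leq 0$, non-increasing parts, $r>2$), and in particular noticing that $s$ and $p$ are invariant under deleting one box from the first part, so that the inequality $d_1 \geq s-1$ is maintained on the nose. It is also worth stating cleanly that the induction bottoms out exactly when the descent first reaches $d_1' = s'-1$, at which moment one stops using Lemma \ref{residuo}(ii) and invokes Proposition \ref{d1<s}; the base case is the condition $d_1 < s$, not a single numerical value of $d_1$. Everything else is the one-line arithmetic displayed above.
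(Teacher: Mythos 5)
Your proposal is correct and follows essentially the same route as the paper: induction on $d_1$ with base case $d_1=s-1$ handled by Proposition \ref{d1<s}, descent via Lemma \ref{residuo}(ii) with $e=1$ to $[d_1-1,d_2,\dots,d_r]$ (noting $s$ and $p$ are unchanged), and the same binomial arithmetic to match the expected dimension. Your added remarks — that the expected dimension is positive so the lower bound is automatic, and the explicit check that $\lambda'$ satisfies all the hypotheses — only make explicit what the paper leaves implicit.
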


%%%%%%%%
 %%%%%%%%%%%%%%%%%%%%%%%%%%%%%%%I
 \begin{lem} \label{d1>=s-1,2p>3s}  
Let $r>2$, and $2p - 3s >0$, then

(i) for  $d_1\geq s-1$, 
$$ {\dim } (I_Z)_d = {d_1-s+2 \choose 2} ;$$

(ii)  if $d_1\geq s$,  then  $(I_{Z})_d$  is defective with defect 
$$\delta =\min \left \{  {d_1-s+2 \choose 2}; 2p-3s \right\}.$$

\begin{proof} 
(i) By induction on $d_1$.
If $d_1 = s-1$, then, by Lemma \ref{d1<s}, we have that $ {\dim } (I_Z)_d = 
{\rm exp.dim } (I_Z)_d  = \max \left \{ 0; {d_1-s+2 \choose 2}+3s-2p\right\}=0={d_1-s+2 \choose 2} .$

Let $d_1 \geq s$.

Now recall that $Z = Y_F \cup Y_G$ is  union of  two sets of points of $\PP^2$ defined by the ideals of two  general points $[F]$ and $[G]$  of  $\mathbb{X}_{2,\lambda}$, 
$$ F =  F_1\cdots F_r ; \ \ \ \  G = G_1\cdots G_r ,
$$
and $\deg F_i =\deg G_i = d_i.$ Hence
$$  F_2\cdots F_r \cdot G_2\cdots G_r ,
$$
is a form of degree $2s$ in the ideal $I_Z$. It follows that there are at least 
${d-2s+2\choose 2}= {d_1-s+2\choose 2}$ independent  forms in $(I_Z)_d,$ that is,
$$ {\dim } (I_Z)_d \geq {d_1-s+2 \choose 2} .$$
By Lemma \ref {residuo} and by the induction hypothesis we get
$$ {\dim } (I_Z)_d \leq {\dim } (I_{Z'})_{d-1} +d_1-s+1 
={d_1-s+1 \choose 2} +d_1 -s +1 ={d_1-s+2 \choose 2},$$
where $Z'$ is the scheme associated to
the partition $[d_1-1; d_2; \ldots ; d_r]$ of $d-1$, and the conclusion follows.

 (ii) The expected dimension of  $ (I_Z)_d $ is
$$ {\rm exp.dim } (I_Z)_d =\max \left \{ 0; {d_1+s+2 \choose 2}-2d_1s-2p \right\}$$
$$=\max \left \{ 0; {d_1-s+2 \choose 2}+3s-2p\right\}.$$
Since, by (i),
$\dim (I_{Z})_d = {d_1-s+2 \choose 2} >0,$ then
$(I_{Z})_d$  is defective with defect 
$$\delta = {d_1-s+2 \choose 2} -\max \left \{ 0; {d_1-s+2 \choose 2}+3s-2p\right\}
$$
$$=\min \left \{  {d_1-s+2 \choose 2}; 2p-3s \right\}.$$  
\end{proof}
\end{lem}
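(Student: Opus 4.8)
The plan is to establish part (i) by induction on $d_1$ and then to deduce part (ii) by comparing the value so obtained with the expected dimension. For the base case $d_1=s-1$, note that ${d_1-s+2 \choose 2}={1 \choose 2}=0$; since here $d_1<s$, Proposition \ref{d1<s} says $(I_Z)_d$ has the expected dimension $\max\{0,\,{d+2 \choose 2}-2D\}$. Writing $d=d_1+s$, $D=d_1s+p$, and using the binomial identity ${d_1+s+2 \choose 2}-2d_1s-3s={d_1-s+2 \choose 2}$, this expected dimension equals $\max\{0,\,{d_1-s+2 \choose 2}+3s-2p\}$, which is $0$ because $2p-3s>0$ and ${1 \choose 2}=0$. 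Hence $\dim(I_Z)_d=0={d_1-s+2 \choose 2}$, as claimed.

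For the inductive step $d_1\geq s$, I would first observe that $d_1>d_2$ (otherwise $s-d_1=d_3+\cdots+d_r>0$ since $r>2$, contradicting $d_1\geq s$), so that $\lambda'=[d_1-1,d_2,\ldots,d_r]$ is an admissible partition of $d-1$ with the same $s$ and $p$ (neither involves the first part) and with $d_1-1\geq s-1$; thus $\lambda'$ satisfies the hypotheses of the lemma with strictly smaller first part, and the induction hypothesis gives $\dim(I_{Z'})_{d-1}={d_1-s+1 \choose 2}$ for the associated scheme $Z'$. Lemma \ref{residuo}(ii), applied with $e=1$ (legitimate since $d_1\geq s\geq 2>1$), then yields the upper bound $\dim(I_Z)_d\leq\dim(I_{Z'})_{d-1}+d_1-s+1={d_1-s+1 \choose 2}+(d_1-s+1)={d_1-s+2 \choose 2}$. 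For the matching lower bound, the key observation is that $F_2\cdots F_r\cdot G_2\cdots G_r$ is a form of degree $2s$ lying in $I_Z$: every point of $Y_F$ lies on some $\{F_i=0\}$ with $i\geq 2$ and every point of $Y_G$ on some $\{G_i=0\}$ with $i\geq 2$, so the product vanishes on all of $Z$. Multiplying this fixed form by an arbitrary form of degree $d-2s=d_1-s\geq 0$ produces ${d_1-s+2 \choose 2}$ linearly independent elements of $(I_Z)_d$, giving $\dim(I_Z)_d\geq{d_1-s+2 \choose 2}$ and hence equality. This proves (i).

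For part (ii), when $d_1\geq s$ we have ${d_1-s+2 \choose 2}>0$, whereas the expected dimension $\max\{0,\,{d_1-s+2 \choose 2}+3s-2p\}$ is strictly smaller, since $2p-3s>0$; hence $(I_Z)_d$ is defective with defect ${d_1-s+2 \choose 2}-\max\{0,\,{d_1-s+2 \choose 2}+3s-2p\}$, which is $2p-3s$ when ${d_1-s+2 \choose 2}\geq 2p-3s$ and ${d_1-s+2 \choose 2}$ otherwise, i.e. $\min\{{d_1-s+2 \choose 2},\,2p-3s\}$.

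The one genuinely substantive step is the lower bound in the inductive case: pinning down the degenerate form $F_2\cdots F_r\cdot G_2\cdots G_r$, checking it sits in $I_Z$, and counting the forms it spawns. Once that is in hand the remainder is a routine induction on $d_1$ resting on Lemma \ref{residuo}(ii) and Proposition \ref{d1<s}, together with the single binomial identity recorded above; the only mild care needed is verifying that $\lambda'$ stays inside the regime $d_1'\geq s'-1$, $2p'-3s'>0$, which follows at once once $d_1>d_2$ is known.
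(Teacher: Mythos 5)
Your proof is correct and follows essentially the same route as the paper's: induction on $d_1$ with base case $d_1=s-1$ settled by Proposition \ref{d1<s}, the upper bound from Lemma \ref{residuo}(ii) applied with $e=1$, the matching lower bound from the degree-$2s$ form $F_2\cdots F_r\cdot G_2\cdots G_r\in I_Z$, and part (ii) by comparison with the expected dimension. Your explicit check that $d_1>d_2$ (so that $\lambda'$ remains a valid sorted partition with the same $s$ and $p$) is a small point the paper records only in the companion Lemma \ref{d1>=s-1,2p<=3s}, but it is a welcome addition rather than a divergence.
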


%%%%%%%%
\section { A pictorial description of the Main Theorem}

%%%%%%%%%

 As we noticed, Theorem \ref{theorem} directs us to the hyperplane $\Bbb H$ in $\Bbb N^r$ defined by $x_1=x_2+ \cdots +x_r$. More precisely, from Theorem \ref{theorem} we see that if $(d_1, \dots, d_r) \in \Bbb N^r$ is such that $d_1< d_2 + \cdots+ d_r$,  i.e.  is ``below'' $\Bbb H$, then $\sigma_2(\XX_{2,\lambda})$, $\lambda = [d_1, \dots, d_r]$, is not defective.

Thus, it only remains to give a more detailed description of defective and non-defective cases ``above'' $\Bbb H$. From Theorem \ref{theorem} we know that $\delta > 0$ if and only if $2p-3s > 0$.  The next proposition  gives a complete list of the defective cases.

 %%%%%%%%%%%%%%%%%%%%%%%%%%%%%%

\begin{prop}\label{2p>3s}   $i)$\ We have $2p-3s >0$ if and only if we are in one of the following cases:  

\begin {itemize}
\item   $r =3$, $d_3=2$ and $d_2 \geq 7$;
\item   $r =3$, $d_3=3$ and $d_2 \geq 4$;
\item   $r =3$, $d_3 \geq 4$;
\item   $r =4$, $d_3 \geq 2$;
\item   $r =4$, $d_2 \geq 5$;
\item   $r =5$, $d_2 \geq 2$;
\item   $r \geq 6$.
\end {itemize}

$ii)$\ If $d_1\geq s$ and $2p-3s >0$,  then $\delta >0$ and $\sigma_2(\mathbb{X}_{2,\lambda})$ is defective if and only if we are in one of the cases listed above.
%
%
%
%$ii)$\ If $d_1\geq s$ then $\delta >0$ and $\sigma_2(\mathbb{X}_{2,\lambda})$ is defective if and only if we are in one of the cases listed above.}
\end{prop}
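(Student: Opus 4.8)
The plan is to prove Proposition \ref{2p>3s} in two stages. Part $i)$ is a pure inequality-classification: I want to describe exactly which partitions $\lambda=[d_1,\dots,d_r]$ with $d_1\geq d_2\geq\cdots\geq d_r\geq 1$ satisfy $2p-3s>0$, where $s=d_2+\cdots+d_r$ and $p=\sum_{2\leq i<j\leq r}d_id_j$. Notice that both $p$ and $s$ depend only on $d_2,\dots,d_r$, not on $d_1$, so the condition $2p-3s>0$ is really a statement about the ``tail'' partition $[d_2,\dots,d_r]$. First I would reduce to the tail: set $\mu=[d_2,\dots,d_r]$, a partition with $r-1$ parts, and observe $2p(\mu)-3s(\mu)$ is monotone non-decreasing when any part of $\mu$ is increased (for fixed number of parts) and also when a new part $\geq 1$ is appended, since $\partial(2p-3s)/\partial d_k = 2\sum_{i\neq k, i\geq 2}d_i - 3 = 2(s-d_k)-3$, which is $\geq -1$ and is $\geq 0$ as soon as $s-d_k\geq 2$. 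So the set of tails with $2p-3s>0$ is upward closed in the dominance-by-$\succeq$ order once we fix $r$, and I only need to locate the minimal tails for each value of $r$. A short case analysis on $r$ then does it: for $r-1=1$ (so $r=2$) one has $p=s=0$ trivially, never positive; for $r-1=2$ (so $r=3$) with tail $[d_2,d_3]$, $2p-3s=2d_2d_3-3d_2-3d_3$, and one checks directly that this is positive exactly for $d_3=2,d_2\geq 7$; $d_3=3,d_2\geq 4$; $d_3\geq 4$ (using $d_2\geq d_3$). For $r=4$ the minimal tails $[2,1,1]$, $[2,2,1]$ give $2p-3s$ equal to $2\cdot 5 - 3\cdot 4=-2$ and... here I'd just tabulate: $[2,1,1]$ gives $-2$, $[1,1,1]$ gives $-3$, but $[2,2,1]$ gives $2\cdot 8-3\cdot 5=1>0$, $[2,1,1]$ fails while $[3,1,1]$ gives $2\cdot 7-3\cdot 5=-1$ still fails, $[3,2,1]$ gives $2\cdot 11-3\cdot6=4>0$ — so one sees the minimal positive tails are $[2,2,1]$ (i.e. $d_3\geq 2$) and, among $d_3=1$ tails, $[5,1,1]$ (i.e. $d_2\geq 5$), matching the stated list. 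For $r=5$ the minimal positive tail is $[2,1,1,1]$ ($2\cdot 7 - 3\cdot 5 = -1$? let me recompute: $p=d_2d_3+d_2d_4+d_2d_5+d_3d_4+d_3d_5+d_4d_5$; for $[2,1,1,1]$, $p=2+2+2+1+1+1=9$, $s=5$, $2p-3s=18-15=3>0$), so $d_2\geq 2$ suffices, and $[1,1,1,1]$ gives $p=6,s=4$, $2p-3s=0$, not positive; for $r\geq 6$ the all-ones tail $[1,\dots,1]$ with $r-1\geq 5$ parts gives $p=\binom{r-1}{2}$, $s=r-1$, and $2\binom{r-1}{2}-3(r-1)=(r-1)(r-2)-3(r-1)=(r-1)(r-5)>0$ for $r\geq 6$.

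For part $ii)$, the work is essentially already done. By Theorem \ref{theorem}, $\sigma_2(\XX_{2,\lambda})$ is defective if and only if $d_1\geq s$ and $2p-3s>0$; and when $d_1\geq s$ and $2p-3s>0$, Lemma \ref{d1>=s-1,2p>3s}(ii) gives $\delta=\min\{\binom{d_1-s+2}{2},\,2p-3s\}$, which is strictly positive since $d_1\geq s$ forces $\binom{d_1-s+2}{2}\geq 1$ and by hypothesis $2p-3s\geq 1$. So under the hypothesis $d_1\geq s$, defectivity is equivalent to $2p-3s>0$, which by part $i)$ is equivalent to $\lambda$ having a tail belonging to the listed families. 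I would state this as: combine Theorem \ref{theorem} (defectivity $\iff$ $d_1\geq s$ and $2p-3s>0$), Proposition \ref{difetto} (translating $\delta_2=\delta$), and part $i)$ of the present proposition.

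The main obstacle is purely bookkeeping in part $i)$: making the monotonicity argument airtight near the boundary where $s-d_k$ is small (the partial derivative $2(s-d_k)-3$ can be negative, equal to $-1$, precisely when $d_k=s-1$, i.e. when one tail part dominates the rest of the tail), so increasing a part can in principle decrease $2p-3s$ by $1$. I would handle this by first restricting to tails with $d_2 < d_3+\cdots+d_r$ versus $d_2\geq d_3+\cdots+d_r$ separately, or more cleanly by verifying directly that on the relevant ``minimal'' tails listed, and on their immediate $\succeq$-predecessors, the sign of $2p-3s$ behaves as claimed; since only finitely many small tails need checking for each $r\in\{3,4,5\}$ and the all-ones family handles $r\geq 6$ uniformly, this is a finite verification. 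The other mild subtlety is that the listed families overlap (e.g. a partition with $r\geq 6$ is listed both under ``$r\geq 6$'' and, if $d_3\geq 2$, potentially matches earlier tail-shapes after truncation), but since the claim is an ``if and only if'' the union is all that matters, so no disjointness need be argued.
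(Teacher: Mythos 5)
Your proposal is correct, but for part $i)$ it follows a genuinely different route from the paper's. The paper handles each $r$ by an explicit algebraic identity whose sign is read off directly: for $r=3$ a case split on $d_3\le 3$ giving $2p-3s\in\{-d_2-3,\ d_2-6,\ 3d_2-9\}$ plus the bound $2p-3s\ge 5d_2-12$ for $d_3\ge 4$; for $r=4$ and $r=5$ factored identities such as $2p-3s=(2d_5+d_3)(d_2-1)+(2d_4+d_2)(d_3-1)+(2d_2+d_5)(d_4-1)+(2d_3+d_4)(d_5-1)$, which make non-negativity and the equality cases transparent; and for $r\ge 6$ an induction on $r$ that peels off the last part, using the $r=5$ case ($2p-3s\ge 0$) as the base. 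You instead argue by monotonicity of $2p-3s$ in each tail part (upward closure under $\succeq$ for fixed $r$) together with a finite verification at the minimal tails. Both work: your increment $2(s-d_k)-3$ is $\ge 2(r-2)-3\ge 1$ for every $r\ge 4$, so upward closure is genuinely valid there and the finite checks you list ($[2,2,1]$, $[5,1,1]$, $[4,1,1]$, $[2,1,1,1]$, $[1,1,1,1]$, and $(r-1)(r-5)$ for the all-ones tail) are exactly what is needed; the one place the increment can be $-1$ is $r=3$ with $d_3=1$, and there you rightly abandon monotonicity and compute $2d_2d_3-3d_2-3d_3$ directly, which is also what the paper does. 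The paper's identities buy a self-contained sign analysis with no monotonicity lemma to justify; your approach is more uniform and explains \emph{why} the answer is described by finitely many minimal tails, at the cost of having to police the boundary case you flagged. For part $ii)$ you and the paper do the same thing: it is immediate from Theorem \ref{theorem} and Lemma \ref{d1>=s-1,2p>3s}(ii), since $d_1\ge s$ gives $\binom{d_1-s+2}{2}\ge 1$ and $2p-3s>0$ gives $2p-3s\ge 1$, so $\delta=\min\bigl\{\binom{d_1-s+2}{2},\,2p-3s\bigr\}>0$.
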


\begin {proof}  First note that $ii)$ is immediate from Lemma 4.10. 

$i)$\ Obvious for $r=2$,  since in this case $p=0$.

If $r=3$,  the cases when $d_3\leq 3$ follow because we have: 

$$
2p-3s= \left\{  \begin {matrix} 
\hfill -d_2-3 & \hbox{for } d_3=1,   \\
\hfill d_2-6 & \hbox{for } d_3=2,  \\
\hfill 3d_2-9 & \hbox{for } d_3=3 .
\end{matrix}
\right. 
$$

When $d_3 \geq 4$, since $d_2 \geq d_3$, we get $2p-3s\geq 5d_2-12>0$. This finishes the case $r=3$.
 
 If $r=4$, from the equality
$$2p-3s=2d_2(d_3-1)+2d_3(d_4-1)+2d_4(d_2-1)-(d_2+d_3+d_4),$$ 
it is not hard to check that if  $d_3=d_4=1$ and $d_2\leq 4$ then $2p-3s\leq 0$, while it is positive otherwise (i.e. in the cases given in the statement of the proposition).

 If $r=5$, the conclusion follows from the equality
$$2p-3s=(2d_5+d_3)(d_2-1)+(2d_4+d_2)(d_3-1)+(2d_2+d_5)(d_4-1)+(2d_3+d_4)(d_5-1).$$ 
Which shows that we always have $2p-3s\geq0$, and $2p-3s=0$ only for $[d_1,1,1,1,1]$, i.e.  for $d_2=1$.

 Now let $r\geq6$. We will work by induction on $r$. We use as our starting point the case $r=5$, since we just noticed that $2p-3s\geq 0$ in that case.
Now let $r>5$ and consider the equality
$$2p-3s\ =\ 2\sum _{2\leq i<j\leq r-1}d_id_j - 3(d_2+\cdots +d_{r-1})  +2(d_2+\cdots+d_{r-1})d_r-3d_r. 
$$
Since $2(d_2+\cdots+d_{r-1})d_r-3d_r > 0$, and, by  the induction hypothesis, $$2\sum _{2\leq i<j\leq r-1}d_id_j - 3(d_2+\cdots d_{r-1}) \geq 0,$$
   then,  for $r\geq6$, $2p-3s$   is positive  and     we are done. 
\end {proof}

%%%%%%
\medskip

\begin{rem}\label{2p<=3s}  From Proposition \ref{2p>3s} we have $2p-3s \leq 0$ if and only if we are in the following cases.

\begin {itemize}
\item   $r =2$;
\item   $r =3$, $[d_2,d_3]\in \{ [a,1], [2,2],[3,2],[4,2],[5,2],[6,2],[3,3] \}$, where $a\geq 1$;
\item   $r =4$, $[d_2, d_3, d_4]\in \{ [1,1,1][2,1,1],[3,1,1], [4,1,1]  \}$;
\item   $r =5$, $[d_2,\dots,d_5]=[1,1,1,1]$.
\end {itemize}

Note that for $d_1 \geq s$ in the cases above, we have $\delta = 0$ and $\sigma_2(\XX_{2,\lambda})$ is not defective.
\end{rem}

%%%%%%

From Remark \ref {2p<=3s}  we see that to pictorially describe all the non-defective $\lambda$ above $\Bbb H$, there are exactly five cases to consider for $r$. Recall that we are only considering $$\lambda = [d_1, \ldots, d_r] \ \ \hbox{with }  \ \ d_1 \geq d_2 + \cdots + d_r.
$$

 %%%%%%%%%%%%%%%%%%%%%%%%%%%%%%
 
  %%%%%%%%%%%%%%%%%%%%%%%%%%%%%%
  
  \begin {itemize}
  
\item $r =2$: in this case, for every $\lambda$, we get that  $\sigma_2(\XX_{2,\lambda})$ is non-defective.

   %%%%%%%%%%%%%%%%%%%%%%%%%%%%%%
   
\item $r =3$:   $\sigma_2(\XX_{2,\lambda})$ is defective except for

 $$
 \begin{array}{llllllllllllllllll}
 \lambda
 & = & [d_1,d_2,1] , [d_1,2,2], [d_1,3,2] , [d_1,4,2]    , [d_1,5,2] , [d_1,6,2]  , [d_1,3,3] \\
 &    & \hbox {with } \ \ d_1 \geq d_2+1, 4,5,6,7,8 \hbox { and } 6, \  \hbox {respectively  (See Figure~\ref{FIG:r=3}).}
 \end{array}
$$

%\vfill\eject
 
% \medskip 

 % \medskip 

 %%%%%%%%%%%%%%%%%%%%%%%%%%
 
{%\scriptsize
\begin{figure}[ht] 
\centering

\definecolor{uuuuuu}{rgb}{0.26666666666666666,0.26666666666666666,0.26666666666666666}
\begin{tikzpicture}[line cap=round,line join=round,>=triangle 45,x=0.3cm,y=0.3cm]
\clip(-1.1,-1.2) rectangle (15.005358755092532,14.987074591164674);
\draw[smooth,samples=100,domain=1.6:15.0] plot(\x,{3.0*(\x)/(2.0*(\x)-3.0)});
\draw [->] (-0.54,0.0) -- (15.0,0.0);
\draw [->] (0.0,-0.48) -- (0.0,15.0);
\draw [dotted] (-0.46,-0.42)-- (7.0,7.0);

\draw (5.3,8.2) node[anchor=north west] {\scriptsize$d_2=d_3$};

\draw (4,4.3) node[anchor=north west] {\scriptsize$2d_2d_3-3(d_2+d_3)=0$};

\draw (4,11.723131742684735) node[anchor=north west] {\parbox{5 cm}{\scriptsize$d_2,d_3\in \mathbb N^+\\  2d_2d_3-3(d_2+d_3)\le 0  \\ d_2\ge d_3$}};
\draw [dotted] (0.0,1.0)-- (2.54,0.98);
\draw [dotted] (0.0,2.0)-- (2.56,1.98);
\draw [dotted] (0.0,3.0)-- (3.58,2.98);
\draw [dotted] (0.0,4.0)-- (2.48,3.98);
\draw [dotted] (0.0,5.0)-- (2.52,5.0);
\draw [dotted] (0.0,6.0)-- (2.44,6.0);
\draw [dotted] (0.0,7.0)-- (2.46,7.0);
\draw [dotted] (0.0,8.0)-- (2.48,8.0);
\draw [dotted] (0.0,9.0)-- (2.5,9.0);
\draw [dotted] (0.0,10.0)-- (2.46,10.0);
\draw [dotted] (0.0,11.0)-- (2.48,11.0);
\draw [dotted] (0.0,12.0)-- (2.5,12.0);
\draw [dotted] (0.0,13.0)-- (2.48,13.0);
\draw [dotted] (0.0,14.0)-- (2.46,14.0);
\draw (0.43,15) node[anchor=north west] {$\vdots$};

\draw [dotted] (1.0,15.0)-- (1.0,0.0);
\draw [dotted] (2.0,15.0)-- (2.0,0.0);

\draw (0.4,0.1) node[anchor=north west] {\scriptsize$1$};
\draw (1.4,0.1) node[anchor=north west] {\scriptsize$2$};
\draw (2.4,0.1) node[anchor=north west] {\scriptsize$3$};
\draw (3.4,0.1) node[anchor=north west] {\scriptsize$4$};
\draw (4.4,0.1) node[anchor=north west] {\scriptsize$5$};
\draw (5.4,0.1) node[anchor=north west] {\scriptsize$6$};
\draw (6.4,0.1) node[anchor=north west] {\scriptsize$7$};
\draw (7.4,0.1) node[anchor=north west] {\scriptsize$8$};
\draw (8.3,0.1) node[anchor=north west] {\scriptsize$10$};
\draw (9.4,0.1) node[anchor=north west] {\scriptsize$11$};
\draw (10.5,0.1) node[anchor=north west] {\scriptsize$12$};
%\draw (11.4,0.1) node[anchor=north west] {\scriptsize$13$};

%\draw (12.4,0.14) node[anchor=north west] {\scriptsize$14$};
%\draw (13.4,0.14) node[anchor=north west] {\scriptsize$15$};

\draw (12,0.15) node[anchor=north west] {\scriptsize$d_3$};

\draw (-1.1,1.7) node[anchor=north west] {\scriptsize$1$};
\draw (-1.1,2.7) node[anchor=north west] {\scriptsize$2$};
\draw (-1.1,3.7) node[anchor=north west] {\scriptsize$3$};
\draw (-1.1,4.7) node[anchor=north west] {\scriptsize$4$};
\draw (-1.1,5.7) node[anchor=north west] {\scriptsize$5$};
\draw (-1.1,6.7) node[anchor=north west] {\scriptsize$6$};
\draw (-1.1,7.7) node[anchor=north west] {\scriptsize$7$};
\draw (-1.1,8.7) node[anchor=north west] {\scriptsize$8$};
\draw (-1.1,9.7) node[anchor=north west] {\scriptsize$9$};
\draw (-1.4,10.7) node[anchor=north west] {\scriptsize$10$};
\draw (-1.4,11.7) node[anchor=north west] {\scriptsize$11$};
\draw (-1.4,12.7) node[anchor=north west] {\scriptsize$12$};
%\draw (-1.4,13.55) node[anchor=north west] {\scriptsize$13$};

\draw (-1.43,14) node[anchor=north west] {\scriptsize$d_2$};

%\draw (-0.4145658371856749,14.351928955784794) node[anchor=north west] {\scriptsize$14$};

\draw [->] (9,2.9) -- (7.416336375266204,1.9474260777500685);

\begin{scriptsize}
\draw [fill=black] (0.9839144415941035,1.0161722730064675) circle (1.5pt);
\draw [fill=black] (1.9865953679950863,2.013476894172056) circle (1.5pt);
\draw [fill=uuuuuu] (3.0,3.0) circle (1.5pt);
\draw [fill=black] (1.0,2.0) circle (1.5pt);
\draw [fill=black] (1.0,3.0) circle (1.5pt);
\draw [fill=black] (1.0,4.0) circle (1.5pt);
\draw [fill=black] (1.0,5.0) circle (1.5pt);
\draw [fill=black] (1.0,6.0) circle (1.5pt);
\draw [fill=black] (1.0,7.0) circle (1.5pt);
\draw [fill=black] (1.0,8.0) circle (1.5pt);
\draw [fill=black] (1.0,9.0) circle (1.5pt);
\draw [fill=black] (1.0,10.0) circle (1.5pt);
\draw [fill=black] (1.0,11.0) circle (1.5pt);
\draw [fill=black] (1.0,12.0) circle (1.5pt);
%\draw [fill=black] (1.0,13.0) circle (1.5pt);

%\draw [fill=black] (1.0,14.0) circle (1.5pt);

\draw [fill=black] (2.0,3.0) circle (1.5pt);
\draw [fill=black] (2.0,4.0) circle (1.5pt);
\draw [fill=black] (2.0,5.0) circle (1.5pt);
\draw [fill=uuuuuu] (2.0,6.0) circle (1.5pt);
\end{scriptsize}
\end{tikzpicture}
\vskip -1pc

\caption{} \label{FIG:r=3}

\end{figure} }

%\vfill\eject
 %%%%%%%%%%%%%%%%%%%%%%%%%%%%%%
  Here the $\bullet$ (under the hyperbola) represent non-defective cases, while all other points 
above the line $d_2 = d_3$ represent defective cases (the locus under the line we
are not interested in because we are assuming, w.l.o.g., that $d_2\geq d_3$). 
 
\item  $r =4$:   $\sigma_2(\XX_{2,\lambda})$ is defective except for  
 $$\begin{array}{llllllllllllllllllll}
 \lambda
 & = & [d_1,1,1,1] , [d_1,2,1,1], [d_1,3,1,1] , [d_1,4,1,1], \\
 &    & \hbox {with } \ \ d_1 \geq 3,4,5 \hbox { and }  6,\  \hbox {respectively  (See Figure~\ref{FIG:r=4}). }
 \end{array} 
 $$

 %\medskip 

  %\medskip 
%%%%%%%%%%%%%%%%%%%%%%%%%%

\begin{figure}[ht]
\centering
\vskip-1.2pc
\definecolor{zzttqq}{rgb}{0.6,0.2,0.0}
\definecolor{uuuuuu}{rgb}{0.26666666666666666,0.26666666666666666,0.26666666666666666}
\definecolor{qqqqff}{rgb}{0.0,0.0,1.0}
\begin{tikzpicture}[line cap=round,line join=round,>=triangle 45,x=0.6cm,y=0.6cm]
\clip(-2.3040120958245542,-0.8636160472153103) rectangle (6.035382058226799,12);

\draw [->] (-1.964490795368808,-0.6555223469359805) -- (6.0,2.0);
\draw [->] (1.4776606378231216,-0.7415761327657788) -- (-2.0,1.0);

\draw (.91,0.47) node[anchor=north west] {\tiny$1$};
\draw (2.1,   0.87) node[anchor=north west] {\tiny$2$};
\draw (3.33,1.26) node[anchor=north west] {\tiny$3$};
\draw (4.4,1.65) node[anchor=north west] {\tiny$4$};

\draw (-1.12,0.53) node[anchor=north west] {\tiny$1$};

\draw [dotted] (1.6101218369259607,1.203373945641987)-- (1.5809251808188796,5.2133458466562566);
\draw [dotted] (2.8,1.6)-- (2.8028889396020147,6.745103234426664);
\draw [dotted] (3.986484106267421,1.9954947020891403)-- (4.0,10.0);
\draw [dotted] (0.0,6.0)-- (4.78,7.8);
\draw [dotted] (3.632,1.184)-- (3.64,7.36);
\draw [dotted] (2.82,7.74)-- (3.64,7.3);
\draw [dotted] (4.02,8.22)-- (4.8,7.8);
\draw [dotted] (4.02,8.22)-- (2.8,7.74);
\draw [dotted] (2.4022683408341097,0.8045048824976484)-- (2.3999941097275714,5.885774073076522);
\draw [dotted] (3.64,6.36)-- (2.8,6.74);
\draw [dotted] (2.82,6.76)-- (1.64,6.26);
\draw [dotted] (2.42,5.86)-- (1.58,6.22);
\draw [dotted] (1.216,0.392)-- (1.1752763090478346,4.438380308636833);
\draw [dotted] (0.0,4.0)-- (2.3812916588560515,4.8847073437210184);
\draw [dotted] (0.44,0.8)-- (0.4436267138365634,4.804205106242469);
\draw [dotted] (2.3840886837447175,4.883732236156737)-- (1.6047228105848852,5.217746181796666);
\draw [dotted] (0.4436267138365634,4.804205106242469)-- (1.1752763090478346,4.422474882653979);
\draw [dotted] (0.0,3.0)-- (1.1709057225353288,3.420289791135454);
\draw [dotted] (-0.76,3.38)-- (0.48,3.8);
\draw [dotted] (1.6047228105848852,5.249557033762373)-- (0.4536399824934189,4.809992162466653);
\draw [dotted] (-0.78,3.38)-- (-0.782,0.40599999999999997);
\draw [dotted] (0.0,5.0)-- (3.64,6.36);
\draw [dotted] (4.78,7.8)-- (4.744049798912714,1.616);
\draw [dotted] (0.44269066046719185,3.7707213933657893)-- (1.1853707481177598,3.435378406690916);
\draw [dotted] (-0.76,3.38)-- (0.0,3.0);
\draw [dotted] (3.9864917521383028,2.0000228091600687)-- (4.744049798912714,1.6160000000000005);
\draw [dotted] (-0.782,0.40600000000000014)-- (3.9864917521383028,2.0000228091600687);
\draw [dotted] (1.610105347312631,1.2056386873942337)-- (2.402247932138703,0.8501036234919165);
\draw [dotted] (2.8000019091612747,1.6034001513366527)-- (3.632,1.184);
\draw [dotted] (0.4400131307372575,0.8144974672787895)-- (1.2157495009564916,0.41689004246141903);
\draw [dotted] (0.0,1.0)-- (0.7973298162635007,-0.37524141658631427);
\draw [dotted] (0.0,1.0)-- (-1.1900940051026245,-0.3901844528371874);

\draw (0.44,3.7707213933657893)-- (0.44,9.997936338793036);
\draw (1.58,5.2133458466562566)-- (1.58,9.980725581627077);
\draw (2.78,6.74)-- (2.78,10.015147095958996);
\draw (3.98,8.16397819266512)-- (3.98,9.999999999999996);
\draw (-0.5,1.31) node[anchor=north west] {\tiny$1$};
\draw (-0.5,2.31) node[anchor=north west] {\tiny$2$};
\draw (-0.5,3.31) node[anchor=north west] {\tiny$3$};
\draw (-0.5,4.31) node[anchor=north west] {\tiny$4$};
\draw (-0.5,5.31) node[anchor=north west] {\tiny$5$};
\draw (-0.5,6.31) node[anchor=north west] {\tiny$6$};
\draw (-0.5,7.31) node[anchor=north west] {\tiny$7$};
\draw (-0.5,8.25) node[anchor=north west] {\tiny$8$};
\draw (-0.5,9.31) node[anchor=north west] {\tiny$9$};
%\draw (-0.49,10.31) node[anchor=north west] {\tiny$10$};

\draw (0.25,-0.33) node[anchor=north west] {\tiny$-1$};
\draw (-1.7,-0.28) node[anchor=north west] {\tiny$-1$};

\draw (-0.18,3.85) node[anchor=north west] {\tiny$(3,1,1,1)$};

\draw (-0.18,4.87) node[anchor=north west] {\tiny$(4,1,1,1)$};

\draw (0.76,5.29) node[anchor=north west] {\tiny$(4,2,1,1)$};
\draw (1.99,6.8) node[anchor=north west] {\tiny$(5,3,1,1)$};

\draw (3.1,8.2) node[anchor=north west] {\tiny$(6,4,1,1)$};

\draw [->] (0.0,-0.4834147752763842) -- (0.0,11.0);

\fill[color=zzttqq,fill=zzttqq,fill opacity=0.1] (-0.0,1.0) -- (5.143551914172531,6.934421563252218) -- (5.143551914172531,9.997936338793036) -- (-1.2932712658963779,7.743327150052323) -- (-1.2932712658963779,3.4922701300602905) -- cycle;

\draw [color=black,line width=1pt] (-0.0,1.0)-- (5.143551914172531,6.934421563252218);
\draw [color=black,line width=1pt] (5.143551914172531,6.934421563252218)-- (5.143551914172531,9.997936338793036);
\draw [color=black,line width=1pt] (5.143551914172531,9.997936338793036)-- (-1.2932712658963779,7.743327150052323);
\draw [color=black,line width=1pt] (-1.2932712658963779,7.743327150052323)-- (-1.2932712658963779,3.4922701300602905);
\draw [color=black,line width=1pt] (-1.2932712658963779,3.4922701300602905)-- (-0.0,1.0);

\draw [shift={(4.024852698385154,8.535021979686464)},line width=1pt] plot[domain=-1.62075472251684:1.5208379310729534,variable=\t]({1.0*0.3446451436658829*cos(\t r)+-0.0*0.3446451436658829*sin(\t r)},{0.0*0.3446451436658829*cos(\t r)+1.0*0.3446451436658829*sin(\t r)});

\draw [line width=1pt](3.996925134175838,8.178975044725279)-- (2.475884553448785,8.035910021873637);
\draw  [line width=1pt](4.042063455551114,8.879237123005655)-- (2.940574996929696,9.034133937499297);

\draw (-0.6,10.3) node[anchor=north west] {\tiny$d_1$};

\draw (4.9,1.87) node[anchor=north west] {\tiny$d_2$};

\draw (-1.79,0.8) node[anchor=north west] {\tiny$d_3$};

\draw (-0.5,0.05) node[anchor=north west] {\tiny$0$};

\draw (-1.9,11.7) node[anchor=north west] {\tiny$\mathbb H: d_1=d_2+d_3+d_4 \text{ in $\mathbb N^4$ with } d_4=1$};

\draw (2.8,10.6) node[anchor=north west] {\tiny$2p-3s=0$};

\draw [->] (3.735399055139465,10.119976253242557) -- (3.5,9);

\draw [->] (1.482354480686567,11) -- (1.05,8.6);

\begin{scriptsize}
%\draw [fill=qqqqff] (2.8,6.74) circle (1.5pt);
\draw [fill=uuuuuu] (2.78,6.738694726568546) circle (2pt);
\draw [color=black] (2.78,7.74) circle (2pt);
\draw [color=black] (2.78,8.775590373995326) circle (2pt);
\draw [color=black] (2.78,9.757009666570312) circle (2pt);
\draw [fill=black] (3.98,8.16397819266512) circle (2pt);
\draw [fill=uuuuuu] (0.44,3.7707213933657893) circle (2pt);
\draw [color=black] (0.44,4.804205106242469) circle (2pt);
\draw [color=black] (1.58,6.218049027950298) circle (2pt);
\draw [color=black] (0.44,6.831175116810089) circle (2pt);
\draw [color=black] (0.44,5.798535587902606) circle (2pt);
\draw [color=black] (0.44,7.846645129127278) circle (2pt);
\draw [color=black] (0.44,8.827611539364558) circle (2pt);
\draw [color=black] (1.58,7.261461389254693) circle (2pt);
\draw [color=black] (1.58,8.294155492929267) circle (2pt);
\draw [color=black] (1.58,9.292366398990016) circle (2pt);
\draw [fill=black] (1.58,5.2133458466562566) circle (2pt);
\draw [color=black] (3.98,9.257917861117466) circle (2pt);
\end{scriptsize}
\end{tikzpicture}
\vskip -.5pc
\caption{} \label{FIG:r=4}
\end{figure}

\vfill\eject
  %%%%%%%%%%%%%%%%%%%%%%%%%%%%%%
\item $r =5$:   $\sigma_2(\XX_{2,\lambda})$ is defective except for  
 $$
 \lambda= [d_1,1,1,1,1] , \ \ \ \ d_1 \geq 4   \text{ (See Figure~\ref{FIG:r=5}).}
 $$ 
 
\medskip 
 \medskip 
% 
% %%%%%%%%%%%%%%%%%%%%%%%%
% 
%
%
\begin{figure}[ht]
\centering
\vskip -2.3pc
\definecolor{zzttqq}{rgb}{0.6,0.2,0.0}
\definecolor{qqqqff}{rgb}{0.0,0.0,1.0}

\begin{tikzpicture}[line cap=round,line join=round,>=triangle 45,x=0.8cm,y=0.8cm]
\clip(-3.7591033834920546,-1.1921850476563576) rectangle (4.1109064842149445,9.94786868158296);
\fill[color=zzttqq,fill=zzttqq,fill opacity=0.1] (-0.0,2.0) -- (3.51635305484543,6.145855962193699) -- (3.51635305484543,7.804347107277081) -- (-1.850273952358349,6.364901962487731) -- (-1.850273952358349,5.113210532236122) -- cycle;

\draw [->] (-3.0,-1.0) -- (3.970091198311639,1.31119781284686);
\draw [->] (2.0,-1.0) -- (-2.0,1.0);

\draw (1,0.45) node[anchor=north west] {\tiny$1$};
\draw (2.1,0.82) node[anchor=north west] {\tiny$2$};
\draw (-1.02,0.45) node[anchor=north west] {\tiny$1$};

%\draw (1.07555476585479,0.5288906689396046) node[anchor=north west] {$1$};
%\draw (2.3585384818626904,0.9200442408932323) node[anchor=north west] {$2$};
%\draw (-1.0523206655729473,0.5601829546958947) node[anchor=north west] {$1$};

\draw [dotted] (1.216,0.392)-- (1.1752763090478346,4.438380308636833);
\draw [dotted] (0.44,0.8)-- (0.4436267138365634,4.804205106242469);
\draw [dotted] (0.4436267138365634,4.804205106242469)-- (1.1752763090478346,4.422474882653979);
\draw [dotted] (0.4400131307372575,0.8144974672787895)-- (1.2157495009564916,0.41689004246141903);

\draw (-0.35,0.03) node[anchor=north west] {\tiny$0$};
\draw (-0.43,1.3) node[anchor=north west] {\tiny$1$};
\draw (-0.43,2.3) node[anchor=north west] {\tiny$2$};
\draw (-0.43,3.3) node[anchor=north west] {\tiny$3$};
\draw (-0.43,4.3) node[anchor=north west] {\tiny$4$};
\draw (-0.43,5.3) node[anchor=north west] {\tiny$5$};
\draw (-0.43,6.24) node[anchor=north west] {\tiny$6$};
\draw (1.1,-0.73) node[anchor=north west] {\tiny$-2$};
\draw (-2.77,-0.72) node[anchor=north west] {\tiny$-2$};
\draw (-0.485,8.007746964692968) node[anchor=north west] {\tiny$d_1$};
\draw (3.1,1.2) node[anchor=north west] {\tiny$d_2$};
\draw (-1.89,0.87) node[anchor=north west] {\tiny$d_3$};

%\draw (-0.3013058074219813,1.3424900986031503) node[anchor=north west] {$1$};
%\draw (-0.2543673787875459,2.3751355285607274) node[anchor=north west] {$2$};
%\draw (-0.2543673787875459,3.298257958371289) node[anchor=north west] {$3$};
%\draw (-0.27001352166569104,4.330903388328866) node[anchor=north west] {$4$};
%\draw (-0.3013058074219813,5.347902675408299) node[anchor=north west] {$5$};
%\draw (-0.27001352166569104,6.333609676731441) node[anchor=north west] {$6$};
%\draw (1.185077766001806,-0.5976316182868434) node[anchor=north west] {$-2$};
%\draw (-2.5699965247530248,-0.6602161897994239) node[anchor=north west] {$-2$};
%\draw (-6.747516673217774,5.488717961311605) node[anchor=north west] {$d_1$};
%\draw (3.312953197429543,1.2642593842124248) node[anchor=north west] {$d_2$};
%\draw (-1.7720432379676234,0.9356903837713775) node[anchor=north west] {$d_3$};
%\draw (-0.2856596645438362,0.07515252547339638) node[anchor=north west] {$0$};

\draw (-2.5,9.5) node[anchor=north west] {\tiny$\mathbb H: d_1=d_2+d_3+d_4+d_5 \text{ with } d_4=d_5=1$};

\draw (1.4,6.82) node[anchor=north west] {\tiny$2p-3s=0$};

\draw [dotted] (-0.7780406853788455,0.38902034268942276)-- (-0.7706900937663351,4.424780245597737);
\draw [dotted] (0.0,4.0)-- (-0.7706900937663351,4.440426388475882);
\draw [dotted] (0.4436267138365633,4.804205106242469)-- (-0.7706900937663351,4.440426388475882);
\draw [dotted] (0.0,4.0)-- (1.1754094081369262,4.425155340059457);
\draw [dotted] (-0.7780406853788455,0.38902034268942276)-- (0.4400131307372575,0.8144974672787895);
\draw [dotted] (0.0,2.0)-- (1.588839569532499,-0.8217045670689656);
\draw [dotted] (0.0,2.0)-- (-2.349933259798524,-0.7832412519248264);
\draw (0.4436267138365633,4.804205106242469)-- (0.43406290785083973,8.44583896528103);

\draw [shift={(1.7730116733844135,5.479691340420367)},line width=1pt] plot[domain=-1.3985416675424274:1.4377523292568368,variable=\t]({1.0*0.40374823940187243*cos(\t r)+-0.0*0.40374823940187243*sin(\t r)},{0.0*0.40374823940187243*cos(\t r)+1.0*0.40374823940187243*sin(\t r)});
\draw [shift={(-6.058863462979698,41.211915357127204)},line width=1pt] plot[domain=4.853729933557558:4.927684404849162,variable=\t]({1.0*36.983830848250015*cos(\t r)+-0.0*36.983830848250015*sin(\t r)},{0.0*36.983830848250015*cos(\t r)+1.0*36.983830848250015*sin(\t r)});

%\draw (-0.004029092737223867,4.9098106748202355) node[anchor=north west] {$(4,1,1,1,1)$};

\draw (-0.15,4.85) node[anchor=north west] {\tiny$(4,1,1,1,1)$};

\draw (-0.15,5.75) node[anchor=north west] {\tiny$(5,1,1,1,1)$};

\draw [shift={(-5.34139657895604,-40.899890696422965)},line width=1pt] plot[domain=1.4174212517259508:1.4757190732625398,variable=\t]({1.0*47.322058385480084*cos(\t r)+-0.0*47.322058385480084*sin(\t r)},{0.0*47.322058385480084*cos(\t r)+1.0*47.322058385480084*sin(\t r)});

\draw [->] (0.0,-0.5194009038961178) -- (0.0,9);

\draw [->] (2.1394924815686585,6.396194248244021) -- (1.2163700517580962,6.036332962046684);

\draw [->] (0.7000473367793069,9.05) -- (1.65,7.35);

\draw [color=black,line width=1pt] (-0.0,2.0)-- (3.51635305484543,6.145855962193699);
\draw [color=black,line width=1pt] (3.51635305484543,6.145855962193699)-- (3.51635305484543,7.804347107277081);
\draw [color=black,line width=1pt] (3.51635305484543,7.804347107277081)-- (-1.850273952358349,6.364901962487731);
\draw [color=black,line width=1pt] (-1.850273952358349,6.364901962487731)-- (-1.850273952358349,5.113210532236122);
\draw [color=black,line width=1pt] (-1.850273952358349,5.113210532236122)-- (-0.0,2.0);

%\begin{scriptsize}
%\draw [fill=qqqqff] (-14.7,5.58) circle (1.5pt);
%\draw [color=black] (0.44152039960336636,5.606231542248703) circle (2.5pt);
%\draw [color=black] (0.4357368393253445,6.717721821858969) circle (2.5pt);
%\draw [color=black] (0.43326830583543663,7.75036135076645) circle (2.5pt);
%\draw [fill=black] (0.4436267138365633,4.804205106242469) circle (2.5pt);
%\end{scriptsize}

\begin{scriptsize}
\draw [color=black] (0.44,5.7) circle (2pt);
\draw [color=black] (0.44,6.7) circle (2pt);
\draw [color=black] (0.44,7.7) circle (2pt);

\draw [fill=black] (0.44,4.804205106242469) circle (2pt);

\end{scriptsize}

\end{tikzpicture}
\vskip -1pc

\caption{}\label{FIG:r=5}

\end{figure}

 %%%%%%%%%%%%%%%%%%%%%%%%%%%%%%

\item $r \geq 6$: for every $\lambda$, we get that  $\sigma_2(\XX_{2,\lambda})$ is defective.
 
  \end {itemize}
 
 %%%%%%%%%%%%%%%%%%%%%%%%%%%%%%

 \section{ Further Remarks and Questions }

In light of what we have proved in this paper, the question that puzzles us the most is: What can one say about the dimensions of the higher secant varieties of $\XX_{2, \lambda}$?

As we have noted, up to now the only results in this direction are due to Abo \cite{A} who gave a complete answer to this question for $\lambda = [1,\ldots , 1]$.

It's clear that if $\sigma_2(\XX_{2,\lambda})$ fills its ambient space for some $\lambda$  then $\sigma_s(\XX_{2,\lambda})$ has the expected dimension for that $\lambda$ and $s\geq 2$.  Our next goal is to describe all the $\lambda$ such that $\sigma_2(\XX_{2,\lambda})$ fills its ambient space, in other words, if  $\lambda \vdash d$, the generic form $F$ in degree $d$ can be written as  $F = F_1+F_2$, where the $F_i$ belong to $ \XX_{2,\lambda}$.

\begin{prop}\label{sigma2fills}
$\sigma_2(\XX_{2,\lambda}) $ fills the ambient space $\PP^N$,  $(N=  {d+2\choose 2}-1)$,  if and only if  
either $3s-2p \geq  0$, or    $\lambda= [2,2,2,1]$.
\medskip

\noindent (Note: A complete description of those $\lambda$ for which $3s-2p\geq0$ is given in Proposition  \ref{2p>3s} i). )

\begin {proof} We continue with the notation we have used throughout the paper.  We know that
$$
\dim \sigma_2(\XX_{2,\lambda}) = \dim ( (I_{F})_d + (I_{G})_d) -1  =  2 {d+2\choose 2}-2D- \dim (I_Z)_d-1,
$$
hence
$$
\dim \sigma_2(\XX_{2,\lambda}) =N \Leftrightarrow 
\dim (I_Z)_d= {d+2\choose 2} -2D.
$$

Since
$$
\dim (I_Z)_d = \max \left \{{d+2\choose 2} - 2D\ ; 0 \right \} +\delta ,
$$
we get
$$
\dim \sigma_2(\XX_{2,\lambda}) = N \Leftrightarrow 
 {d+2\choose 2} -2D \geq 0 \hbox { and } \delta =0.
$$

For $r=2$, the conclusion follows from Lemma \ref {r=2}.

\medskip

Let $r>2$,  and $d_1 \geq s$. 

In this case  from Lemmata \ref {d1>=s-1,2p<=3s}  and \ref {d1>=s-1,2p>3s}   
  it follows that 
$\delta =0 \Leftrightarrow  3s-2p \geq  0$. 
Since 
$${d+2\choose 2} -2D=  {d_1-s+2 \choose 2} +3s-2p, \eqno (6)
$$
hence, for  $ 3s-2p \geq  0$,  we get
${d+2\choose 2} -2D>0.
$
It follows that for $d_1 \geq s$
$$
\dim \sigma_2(\XX_{2,\lambda}) = N \Leftrightarrow  3s-2p \geq  0 ,$$
and we are done for  $r>2$,  and $d_1 \geq s$. 

\medskip

Now let $r>2$,  and $d_1 < s$. 

In this case we always have  $\delta =0$ (see  Proposition \ref {d1<s}), so we have  to check when 
 $${d+2\choose 2} -2D \geq 0.
$$
Consider 
${d+2\choose 2} -2D$
as a function of $d_1$, say
$$\varphi (d_1) ={d+2\choose 2} -2D. \eqno (7)$$
We  study this function for $d_2 \leq d_1 \leq s-1$.
Now, the parabola represented by (7) has its minimum when $d_1= s- \frac 3 2$.  So  for $d_1 < s- \frac 3 2$, $\varphi$ is decreasing.
Moreover (see (6))  we have
$$ \varphi (s-1) =\varphi (s-2) = 3s-2p.$$
Hence
$3s-2p \geq 0$ implies  $ \varphi (d_1) \geq 0  .$

It remains to check that if  $3s-2p <0$, then  the only case for which    $d_1$   is such that  $ \varphi (d_1) =0$, and   $d_2 \leq d_1 <s $, is when $d_1=2 $ and $\lambda = [2,2,2,1]$.

The case $d_1 = d_2 = s-1$ turns out to be verified only by
the partition $[a; a; 1]$ that was treated in Lemma 4.5 and for which the statement
is true; so we can assume $d_2<s-1$.

Since the parabola of equation (7) is decreasing in the interval $[d_2, s-2] $  and  $\varphi (s-2)<0$,   if we prove that, except for $\lambda = [2,2,2,1]$,   $ \varphi (d_2) <0$, we are done.   

So now we  compute  $ \varphi (d_2) $ for $\lambda \neq [2,2,2,1]$.
In order to do that, it is useful to consider the following equality
$$ 2\varphi (d_1) =
 - \sum _{i=1}^r ((d_1+ \cdots +d_r-2d_i ) (d_i-1)) - (r-5) (d_1+ \cdots +d_r) +2, 
$$
which, for $d_1=d_2$, becomes
$$2 \varphi (d_2) =-2(d_3+ \cdots +d_r ) (d_2-1)
 - \sum _{i=3}^r ((2d_2+d_3 \cdots +d_r-2d_i ) (d_i-1)) $$
 $$- (r-5) (2d_2+d_3+ \cdots +d_r) +2.
$$
Recall that, since $3s-2p <0$,   we have only to consider the cases listed in Proposition \ref {2p>3s}, that is, 
\begin {itemize}
\item   $r =3$, $d_3=2$ and $d_2 \geq 7$;
\item   $r =3$, $d_3=3$ and $d_2 \geq 4$;
\item   $r =3$, $d_3 \geq 4$;
\item   $r =4$, $d_3 \geq 2$;
\item   $r =4$, $d_2 \geq 5$;
\item   $r =5$, $d_2 \geq 2$;
\item   $r \geq 6$.
\end {itemize}

For $r\geq 6$,  we get
$$\varphi (d_2)   \leq
 -  (d_1+ \cdots +d_r) +2 <0.
$$

For  $r = 5$,  since $d_2 \geq 2$, we have
$$ \varphi (d_2) \leq -2(d_3 \cdots +d_5 ) +2 <0.$$

For $r=4$ and $d_3 \geq 2$, and so also $d_2 \geq 2$,  we get
$$ \varphi (d_2) \leq-2d_4 
   - (2d_2+d_3 +d_4-2d_4 ) (d_4-1)+2.$$
  Hence, if $d_4 \geq 2$,  then $ \varphi (d_2) <0$. 
  
 For $r=4$,  $d_4=1$ and $\lambda \neq [2,2,2,1]$, we have $d_3 \geq 3$, hence
  $$ \varphi (d_2) \leq-4(d_3 +d_4 ) - 2(2d_2+d_3+d_4-2d_3 )  
 + (2d_2+d_3+d_4) +2 <0.
$$

It is an easy computation  to check that in  all the remaining cases we have  $\varphi (d_2) <0$.

\end {proof}

\end{prop}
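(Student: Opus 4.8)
The plan is to translate ``$\sigma_2(\XX_{2,\lambda})$ fills $\PP^N$'' into a numerical condition on $(I_Z)_d$ and then to read the answer off the case analysis of Section~4. From the Grassmann computation at the end of Section~2,
$$\dim \sigma_2(\XX_{2,\lambda}) = 2{d+2 \choose 2} - 2D - \dim(I_Z)_d - 1,$$
so $\sigma_2$ fills $\PP^N$ exactly when $\dim(I_Z)_d = {d+2 \choose 2} - 2D$; substituting $\dim(I_Z)_d = \max\{{d+2 \choose 2}-2D,\,0\} + \delta$ (Proposition~\ref{difetto} together with the definition of $\delta$), this becomes the conjunction ${d+2 \choose 2} - 2D \geq 0$ and $\delta = 0$. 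Hence everything reduces to deciding when both hold, and I would organize this by splitting on $r$ and on whether $d_1 \geq s$ or $d_1 < s$.

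The cases $r=2$ and $r>2,\ d_1 \geq s$ are quick. For $r=2$, Lemma~\ref{r=2} gives $\delta = 0$, and ${d+2 \choose 2} - 2D = \frac{1}{2}\big((d_1-d_2)^2 + 3(d_1+d_2) + 2\big) > 0$, which is consistent with $3s - 2p = 3s > 0$ because $p = 0$. For $r>2$ with $d_1 \geq s$, Lemmata~\ref{d1>=s-1,2p<=3s} and \ref{d1>=s-1,2p>3s} give precisely $\delta = 0 \iff 3s - 2p \geq 0$, and the identity ${d+2 \choose 2} - 2D = {d_1 - s + 2 \choose 2} + 3s - 2p$ shows that $3s - 2p \geq 0$ already forces ${d+2 \choose 2} - 2D > 0$; so in this range $\sigma_2$ fills if and only if $3s-2p \geq 0$.

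The substantive case is $r>2,\ d_1 < s$, where Proposition~\ref{d1<s} already gives $\delta = 0$, leaving only the question whether ${d+2 \choose 2}-2D \geq 0$. I would view $\varphi(d_1) := {d+2 \choose 2} - 2D$ as a quadratic polynomial in the single variable $d_1$ (the other $d_i$ fixed): it is convex with vertex at $d_1 = s - \frac{3}{2}$, so on the admissible integer range $d_2 \leq d_1 \leq s-1$ it is decreasing on $[d_2, s-2]$ and satisfies $\varphi(s-1) = \varphi(s-2) = 3s - 2p$. Therefore, if $3s - 2p \geq 0$ then $\varphi \geq 0$ on the whole range and $\sigma_2$ fills. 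If instead $3s - 2p < 0$, then $\varphi(d_1) \leq \varphi(d_2)$, and since the value $\varphi(d_2)$ depends only on $(d_2, \ldots, d_r)$, it suffices to check, for each tuple $(d_2, \ldots, d_r)$ with $3s - 2p < 0$, whether $\varphi(d_2)$ (that is, $\varphi$ evaluated at the first coordinate of $[d_2, d_2, d_3, \ldots, d_r]$) is $\geq 0$; the claim is that this happens only for $\lambda = [2,2,2,1]$, where $\varphi(2) = 0$, so that partition does fill. I expect this last finite check to be the main obstacle. The efficient way to carry it out is to substitute $d_1 = d_2$ into the closed form
$$2\varphi(d_1) = -\sum_{i=1}^r (d_1 + \cdots + d_r - 2d_i)(d_i - 1) - (r-5)(d_1 + \cdots + d_r) + 2,$$
which makes the sign easy to determine for $r \geq 5$, where every summand but the constant $+2$ is non-positive and strictly dominates it (for $r=5$ using $d_2 \geq 2$, which holds throughout the relevant list); for $r = 3$ and $r = 4$ one additionally carries the positive contribution $(5-r)(d_1 + \cdots + d_r)$ and dominates it using the explicit lower bounds on $d_2$ and $d_3$ coming from the list in Proposition~\ref{2p>3s}, which pins $[2,2,2,1]$ down as the single borderline equality.
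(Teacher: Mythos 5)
Your proposal is correct and follows essentially the same route as the paper: the same reduction of ``filling'' to the conjunction $\binom{d+2}{2}-2D\geq 0$ and $\delta=0$, the same split into $r=2$, $d_1\geq s$, and $d_1<s$, and in the last case the same convex-parabola argument in $d_1$ with vertex at $s-\tfrac{3}{2}$, reducing to the sign of $\varphi(d_2)$ via the identity $2\varphi(d_1)=-\sum_i(d-2d_i)(d_i-1)-(r-5)d+2$ and the list from Proposition \ref{2p>3s}. The only (harmless) difference is that you absorb the boundary case $d_2=s-1$ into the general convexity argument rather than treating the partition $[a,a,1]$ separately as the paper does.
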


Other questions which come to mind about the varieties $\XX_{2,\lambda}$ and their secant varieties are:

$a)$\ Which of these varieties is arithmetically Cohen-Macaulay (aCM)? (we think that they are all aCM)

$b)$\ Can one find equations for any of these varieties?  So far we know of no equations satisfied by any of them!

$c)$\ Assuming that the varieties are aCM, what is their Cohen-Macaulay type? (This asks about the rank of the last term in a finite free resolution of the defining ideal.  One can ask about all the ranks and all the graded Betti numbers as well!)

It certainly would be illuminating to have answers to these questions, even for $\lambda = [1, \ldots , 1]$.
 
%%%%%%%%%%%%%%%%%%%%%%%%%%%%%%%%%%%%%%%%%%%%%%%%% 

\section*{Acknowledgments} 

The authors are grateful to the referee for her/his accurate work.

The first and third authors wish to thank Queen’s University, in the person of the second author,
for their kind hospitality during the preparation of this work. The first three  authors enjoyed support from
NSERC (Canada). 
The first author was also  supported by GNSAGA of INDAM and by MIUR funds (Italy).
The third author was also  supported  by MIUR funds (Italy).
The fourth author  was supported by Basic Science Research Program through the National Research Foundation of Korea (NRF) funded by the Ministry of Education, Science, and Technology (2013R1A1A2058240).

%%%%%%%%%%%%%
 
\bibliographystyle{alpha}
\bibliography{biblioSecant}

%\begin{thebibliography}{biblioSecant}

%\end{thebibliography}
\end{document}